\newtheorem{thm}{Theorem}[section]
\newtheorem{lemma}[thm]{Lemma}
\newtheorem{cor}[thm]{Corollary}
\newtheorem{defn}[thm]{Definition}
\numberwithin{equation}{section}
\begin{document}

\title{Heat kernel estimates for FIN processes\\associated with resistance forms}
\author{D.~A.~Croydon\footnote{Department of Statistics, University of Warwick, Coventry, CV4 7AL, United Kingdom.\newline  \tt{d.a.croydon@warwick.ac.uk}}, ~
  B.~M.~Hambly\footnote{Mathematical Institute, University of Oxford, Radcliffe Observatory Quarter, Woodstock Road, Oxford OX2 6GG, United Kingdom. \tt{hambly@maths.ox.ac.uk}} ~
and ~{T.~Kumagai}\footnote{Research Institute for Mathematical Sciences, Kyoto University, Kyoto 606-8502, Japan.\newline \tt{kumagai@kurims.kyoto-u.ac.jp}}}

\maketitle

\begin{abstract}
Quenched and annealed heat kernel estimates are established for Fontes-Isopi-Newman (FIN) processes on spaces equipped with a resistance form. These results are new even in the case of the one-dimensional FIN diffusion, and also apply to fractals such as the Sierpinski gasket and carpet.
\end{abstract}

\medskip
\noindent
{\bf AMS 2010 Mathematics Subject Classification}: 60J35 (primary), 28A80, 60J25, 60K37.

\smallskip\noindent
{\bf Keywords and phrases}: FIN diffusion, transition density, heat kernel, resistance form, fractal.

\section{Introduction}

The Fontes-Isopi-Newman (FIN) diffusion is the time-change of one-dimensional Brownian motion by the positive continuous additive functional with Revuz measure given by
\begin{equation}\label{nudef}
\nu(dx)=\sum_i v_i\delta_{x_i}(dx),
\end{equation}
where $(v_i,x_i)_{i\in\mathbb{N}}$ is the Poisson point process on $(0,\infty)\times \mathbb{R}$ with intensity $\alpha v^{-1-\alpha}dvdx$ for some $\alpha\in (0,1)$, and $\delta_{x_i}$ is the probability measure placing all its mass at $x_i$. This process, introduced in \cite{FIN}, arises naturally as the scaling limit of the one-dimensional Bouchaud trap model \cite{BC,FIN} and the constant speed random walk amongst heavy-tailed random conductances in one dimension \cite{CernyEJP}. In the recent work \cite{CHKtime}, the definition of a FIN diffusion and the latter scaling results were extended to more general spaces admitting a point recurrent diffusion, namely spaces equipped with a resistance form (for a definition of such, see Section \ref{framesec}). Spaces in this class include one-dimensional Euclidean space and various fractals, such as the Sierpinski gasket and Sierpinski carpet. In the present article, we establish quenched (for typical realisation of the FIN measure) and annealed (averaged over the FIN measure) heat kernel estimates for FIN processes associated with resistance forms. These results are new even in the one-dimensional case. En route, we also extend the one-dimensional exit time bounds of \cite{Cabezas,CernyCMP} to our more general setting.

We now introduce the main objects of interest in this study. A resistance metric on a space $F$ is a function $R:F\times F\rightarrow \mathbb{R}$ such that, for every finite $V\subseteq F$, one can find a weighted graph with vertex set $V$ (here, `weighted' means edges are equipped with conductances) for which $R|_{V\times V}$ is the associated effective resistance; this definition was introduced by Kigami in the study of analysis on low-dimensional fractals, see \cite{kig1} for background. We write $\mathbb{F}$ for the collection of quadruples of the form $(F,R,\mu,\rho)$, where: $F$ is a non-empty set; $R$ is a resistance metric on $F$ such that closed bounded sets in $(F,R)$ are compact (note this implies $(F,R)$ is complete, separable and locally compact); $\mu$ is a locally finite, non-atomic
Borel regular measure of full support on $(F,R)$; and $\rho$ is a marked point in $F$. Note that the resistance metric is associated with a so-called `resistance form' $(\mathcal{E},\mathcal{F})$ (another concept introduced by Kigami), and we will further assume that for elements of $\mathbb{F}$ this form is `regular' (see Definition \ref{regulardef}). Whilst we postpone precise definitions for this terminology until Section \ref{framesec}, we note that it ensures the existence of a related regular Dirichlet form $(\mathcal{E},\mathcal{D})$ on $L^2(F,\mu)$, which we suppose is recurrent, and also a Hunt process $((X_t)_{t\geq 0},(P_x)_{x\in F})$ that admits jointly measurable local times $(L_t(x))_{x\in F,t\geq 0}$.

In our construction of a FIN process on $F$, the process $X$ introduced in the previous paragraph will play the role of Brownian motion. To expand on this, first suppose that $\nu$ is the natural generalisation of (\ref{nudef}) given by setting $\nu(dx)=\sum_i v_i\delta_{x_i}(dx)$, where now $(v_i,x_i)_{i\in\mathbb{N}}$ is the Poisson point process on $(0,\infty)\times F$ with intensity $\alpha v^{-1-\alpha}dv\mu(dx)$ for some $\alpha\in (0,1)$. We will write $\mathbf{P}$ for the probability measure on the space upon which this Poisson process is built, and observe that $\mathbf{P}$-a.s.\ the measure $\nu$ is itself a locally finite Borel regular measure of full support on $(F,R)$. Given a realisation of $\nu$ satisfying the latter properties, we then define
\[A^\nu_t:=\int_F L_t(x)\nu(dx),\]
and its right-continuous inverse $\tau^\nu(t):=\inf\{s>0:\:A_s^\nu>t\}$. The process $X^\nu$ obtained by setting
\[X^\nu_t=X_{\tau^\nu(t)}\]
is then the $\alpha$-FIN process associated with the space $(F,R,\mu)$; in general, this process might not be a diffusion, hence we call it a FIN process, rather than a FIN diffusion. (Note that, by applying the trace theorem of \cite[Theorem 6.2.1]{FOT}, this could alternatively simply be seen as Brownian motion on $(F,R,\nu)$). The quenched law of $X^\nu$ started from $x$ will be denoted by $P^\nu_x$ (i.e.\ this is the law of $X^\nu$ under $P_x$ for the given realisation of $\nu$). We have from \cite[Theorem 10.4]{Kig} that, for $\mathbf{P}$-a.e.\ realisation of $\nu$, $X^\nu$ admits a jointly continuous transition density $(p_t^\nu(x,y))_{x,y\in F,\:t>0}$; we call the latter object the quenched heat kernel for $X^\nu$, and its expectation under $\mathbf{P}$ the annealed heat kernel. Providing estimates for $(p_t^\nu(x,y))_{x,y\in F,\:t>0}$ is the goal of this article.

For convenience when presenting our results, we next state several assumptions that we will require concerning the underlying metric measure space. Towards setting out the first of these conditions, we define
$B_R(x,r):=\{y\in F:\: R(x,y)<r\}$ to be the open ball in $(F,R)$ of radius $r$, centred at $x$.
We also write $R_F:=\sup_{x,y\in F}R(x,y)$ for the diameter of $F$ with respect to $R$. Moreover, if we write $f\asymp g$ for two strictly positive functions defined on the same space, we mean that there exist constants $c_1, c_2\in (0,\infty)$ such that $c_1f\leq g\leq c_2g$ everywhere.
\begin{description}
  \item[Uniform volume growth with volume doubling (UVD)] There exist constants $c_d,c_l,c_u$ and a non-decreasing function $v:(0,\infty)\rightarrow(0,\infty)$ satisfying $v(2r) \leq c_d v(r)$ for all $r\in(0,R_F+1)$ such that
      \[c_l v(r) \leq \mu(B_R(x,r)) \leq c_u v(r), \qquad \forall x\in F,\:r\in (0,R_F+1).\]
  \item[Metric comparison (MC)] The function $d:F\times F\rightarrow \mathbb{R}$ is a metric on $F$ such that $d\asymp R^\beta$ for some $\beta>0$.
  \item[Geodesic metric comparison (GMC)] MC holds and also $d$ is a geodesic metric.
\end{description}
The most important of these conditions for our arguments is UVD; indeed, we appeal to it in all that follows. It is easily checked in the case when we have polynomial volume growth, i.e.\ $v(r)\asymp r^{\delta_f}$ for some $\delta_f>0$. The condition MC is clearly always satisfied by taking $d=R$ and $\beta=1$. However, it is often useful in examples to consider an alternative metric to the resistance metric, and so we include this as an option under this assumption. Condition GMC is relatively strong, but is applied to establish matching upper and lower annealed heat kernel bounds, and can be checked for various models of fractal (as we describe below).

We are now in a position to state our annealed heat kernel bounds. For this, we introduce the notation $h(r):=rv(r)^{1/\alpha}$, which gives a natural time-scaling for $X^\nu$. In the following result we assume $X$ is a diffusion. Note that this restriction is not needed for the on-diagonal bounds.

{\thm\label{annthm} There exists an $\alpha_c \in (0,1)$ such that for $\alpha>\alpha_c$ we have the following.\\
(a) Suppose UVD and MC, and that $X$ is a diffusion. Then there exist constants $a,c_1,c_2$ such that
\[\mathbf{E}\left(p_t^\nu(x,y)\right)\leq \frac{c_1h^{-1}(t)}{t}e^{-c_2N(a)},\qquad \forall x,y\in F,\:t\in (0,h(R_F)),\]
where
\begin{equation}\label{nadef}
N(a):=\sup\left\{n\geq 1:\: \frac{at}{n}\leq h\left(\left(\frac{d(x,y)}{n}\right)^{1/\beta}\right)\right\}.
\end{equation}
NB. If the defining set is empty, we set $N(a)=0$.\\
(b) Suppose UVD and GMC hold, and that $X$ is a diffusion. Then there exist constants $a,c_1,c_2$ such that
\[\mathbf{E}\left(p_t^\nu(x,y)\right)\geq \frac{c_1h^{-1}(t)}{t}e^{-c_2N(a)},\qquad \forall x,y\in F,\:t\in (0,h(R_F)),\]
where $N(a)$ is again defined as at (\ref{nadef}).}
\bigskip

To illustrate the above result, suppose GMC holds, and moreover the underlying space satisfies $\mu(B_d(x,r))\asymp r^{d_f}$ (where $B_d(x,r)$ is a ball with respect to the metric $d$), so that UVD holds with $v(r)=r^{\beta d_f}$. In this case the estimates have the
standard sub-diffusive form in that there exist constants $c_1,c_2,c_3,c_4$ such that: provided $\alpha>\alpha_c$, for all $x,y\in F$, $t\in (0,R_F^{1+\beta d_f/\alpha})$,
\begin{equation}\label{twosidedpoly}
c_1t^{-d_s/2}\exp\left\{-c_2\left(\frac{d(x,y)^{d_w}}{t}\right)^{\frac{1}{d_w-1}}\right\}
\leq \mathbf{E}\left(p_t^\nu(x,y)\right)\leq
c_3t^{-d_s/2}\exp\left\{-c_4\left(\frac{d(x,y)^{d_w}}{t}\right)^{\frac{1}{d_w-1}}\right\},
\end{equation}
where
\[d_w:=\frac{d_f}{\alpha}+\frac{1}{\beta}\]
can be considered to be the walk dimension of $X^\nu$ (with respect to $d$), and
\[d_s:=\frac{2d_f}{\alpha d_w}\]
can be considered to be the spectral dimension of $X^\nu$. In this case, we can take (see \eqref{alpha0def})
\[\alpha_c=\frac{\sqrt{\beta^2 d_f^2+4\beta d_f}-\beta d_f}{2}.\]
In particular, all the above assumptions hold (with $\beta=d_f=1$) in the case when $F=\mathbb{R}$, $R=d$
is the Euclidean metric on $\mathbb{R}$, and $\mu$ is Lebesgue measure on $\mathbb{R}$; this setting corresponds
to the original FIN diffusion. Hence in this case, if $\alpha>(\sqrt{5}-1)/2\approx0.618$, we obtain annealed heat kernel
estimates with
\[d_w=\frac{1+\alpha}{\alpha},\qquad d_s=\frac{2}{1+\alpha}.\]
(Note these exponents are continuous as $\alpha \rightarrow 1^-$, with limits 2 and 1, respectively, which are the usual exponents for Brownian motion; that the law of the FIN diffusion converges as $\alpha \rightarrow 1^-$ to that of Brownian motion was checked in \cite[Remark 4.4]{CroyMuir}.)

{\rem {\rm In the polynomial growth case (i.e.\ $v(r)=r^{\beta\delta_f}$), it is possible to check that the value of $\alpha_c$ given above can not be improved by the current arguments, in that the upper bound explodes for any smaller value of $\alpha$. The issue we encounter is that our techniques do not allow us to check the integrability of the on-diagonal part of the heat kernel. Note that if we consider the related issue of estimating the tail of the exit time distribution, a problem for which integrability is not a problem, then we no longer need to restrict the range of $\alpha$. For details, see Proposition \ref{annexit}, which generalises the one-dimensional results of \cite{Cabezas, CernyCMP}.
Furthermore, for general $\alpha$, the argument of Theorem \ref{annthm}(a) will show $\mathbf{E} p_t^{\nu}(\rho,\rho)^\theta\leq c (h^{-1}(t)/t)^\theta$ for suitably small $\theta$. We leave it as an open question to check the finiteness of the annealed on-diagonal heat kernel when $\alpha\leq\alpha_c$. A similar issue was encountered in the study of random walk on infinite variance Galton-Watson trees in \cite{CroydonKumagai}.}}

\bigskip

Going beyond one dimension, one might consider the example of the Sierpinski gasket. Specifically, this is the unique non-empty compact set $F\subseteq \mathbb{R}^2$ satisfying $F=\cup_{i=1}^3\psi_i(F)$, where
\[\psi_i(x):=p_i+\frac{x-p_i}{2},\qquad x\in\mathbb{R}^2,\]
and $\{p_1,p_2,p_3\}$ are the vertices of an equilateral triangle of unit side length. This is equipped with an intrinsic geodesic metric $d$ (which is equivalent to the Euclidean), and also a resistance metric $R$ that satisfies $d\asymp R^\beta$ with $\beta=\ln (2)/\ln (5/3)$, see e.g. \cite{Str} (1.6.10). Moreover, if $\mu$ is the $\ln(3)/\ln(2)$-dimensional Hausdorff measure on $F$ (with respect to $d$), then $\mu(B_d(x,r))\asymp r^{d_f}$ with $d_f=\ln(3)/\ln(2)$.
It follows that (\ref{twosidedpoly}) holds for $\alpha>\alpha_c=0.743$ with
\[d_w:=\frac{\ln\left(5\times 3^{\frac{1}{\alpha}-1}\right)}{\ln(2)},\qquad d_s:=\frac{2\ln(3)}{\ln\left(5\times 3^{\frac{1}{\alpha}-1}\right)}.\]
(Again, these exponents are continuous as $\alpha \rightarrow 1^-$, with limits being equal to the Brownian motion exponents, and a similar argument to the one-dimensional case \cite[Remark 4.4]{CroyMuir} could be used to establish the corresponding convergence of processes.)
We note that it would in fact be possible to check all the relevant conditions for the entire class of nested fractals, of which the Sierpinski gasket is just one example. The results also apply to the two-dimensional Sierpinski carpet, where to establish GMC the results of \cite{BBRes} can be applied as in \cite{CroyLT}.

As well as establishing annealed heat kernel estimates, we investigate the quenched behaviour of the heat kernel. In particular, we study the short-time asymptotics of the on-diagonal part of the heat kernel, both uniformly over compacts and pointwise. Strikingly, in both cases, the fluctuations above the mean behaviour are much smaller than those below. Whilst we postpone the most general statements of our results until later in the article (see Section \ref{qodsec}), let us briefly describe the situation when UVD holds with $v(r)\asymp r^{\delta_f}$. For any compact $G\subseteq F$ with $\mu(G)>0$, we then $\mathbf{P}$-a.s.\ have that
\begin{equation}\label{qodpoly0}
0< \limsup_{t\to 0} \frac{\sup_{x\in G}p_t^{\nu}(x,x)}{ t^{-\delta_f/(\delta_f+\alpha)}\left|\log{t}\right|^{(1-\alpha)/(\delta_f+\alpha)}}
 <\infty,
 \end{equation}
\begin{equation}\label{qodpoly05}
0<c_1\leq \inf_{x\in G}p^\nu_t(x,x)\leq c_2<\infty,\qquad \forall t\in (0,1),
\end{equation}
for some (random) constants $c_1,c_2$. Thus we see logarithmic fluctuations above the mean, and polynomial ones below. The former effect is due to points of unusually low mass, and is common for random self-similar fractals, cf. \cite{Cro2}. The latter effect is due to the atoms in the measure, at which the heat kernel remains bounded as $t\rightarrow 0$. For the pointwise results, we consider the behaviour at the distinguished point $\rho$, though the results could alternatively be stated for $\mu$-a.e.\ point; in either case, there will $\mathbf{P}$-a.s.\ not be an atom at the point under consideration. We have $\mathbf{P}$-a.s.\ that
\begin{equation}\label{qodpoly1}
0< \limsup_{t\to 0} \frac{p_t^{\nu}(\rho,\rho)}{ t^{-\delta_f/(\delta_f+\alpha)}\left(\log{|\log{t}|}\right)^{(1-\alpha)/(\delta_f+\alpha)}}
<\infty.
\end{equation}
Moreover, it $\mathbf{P}$-a.s.\ holds that, for any $\varepsilon>0$, there exists a constant $c_3$ such that
\begin{equation}\label{qodpoly2}
\liminf_{t\to 0} \frac{p_t^{\nu}(\rho,\rho)}{t^{-\delta_f/(\delta_f+\alpha)}|\log{t}|^{-3(1+\varepsilon)/\alpha}}\geq c_3,
\end{equation}
and also there is a constant $c_4$ such that
\begin{equation}\label{qodpoly3}
\liminf_{t\to 0} \frac{p_t^{\nu}(\rho,\rho)}{t^{-\delta_f/(\delta_f+\alpha)}|\log{t}|^{-1/(\delta_f+\alpha)}} \leq c_4.
\end{equation}
The asymmetry of log-logarithmic fluctuations above the mean and logarithmic fluctuations below stems from a similar asymmetry in the FIN measure, which we will derive from classical results about the fluctuations of a related $\alpha$-stable process.

Finally we also give quenched off-diagonal estimates in one-dimension. We show that in this case the fixed environment induces averaging, so that there are no oscillations in the off-diagonal terms. This is demonstrated
in establishing Theorem~\ref{qexit}, a quenched version of the results of
{\v{C}}ern{\'y} \cite{CernyCMP} and Cabezas \cite{Cabezas} on the tail of the exit time distribution,  and we then extend this to a full heat kernel estimate in the following result. Note that the integrability issues arising in the annealed case do not affect the quenched heat kernel bounds which are established for all $0<\alpha<1$.

\begin{thm}\label{thm:fullqhk} Suppose $(p^{\nu}_t(x,y))_{x,y\in\mathbb{R},\:t>0}$ is the quenched heat kernel of the one-dimensional FIN diffusion. Let $x,y$ be fixed with $|x-y|=D$. For any $\varepsilon>0$, there exist constants $c_i, i=1,\dots,4,$ such that $\mathbf{P}$-a.s.\ there exists a $t_0>0$ such that for $0<t<t_0$:
\begin{eqnarray*}
p^{\nu}_t(x,y)&\geq &c_1 t^{-d_s/2}|\log{t}|^{-3(1+\varepsilon)/\alpha} \exp\left(-c_2 \left(\frac{D^{1+1/\alpha}}{T}\right)^{\alpha} \right), \\
p^{\nu}_t(x,y)  &\leq& c_3 t^{-d_s/2}(\log{|\log{t}|})^{(1-\alpha)/\alpha} \exp\left(-c_4 \left(\frac{D^{1+1/\alpha}}{T}\right)^{\alpha} \right).
\end{eqnarray*}
\end{thm}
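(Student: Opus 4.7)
Since the quenched on-diagonal asymptotics \eqref{qodpoly1}--\eqref{qodpoly3} (with $\delta_f=1$, so $d_s/2=1/(1+\alpha)$) and the quenched exit-time tail estimate of Theorem \ref{qexit} are already in hand, the plan is to upgrade these to full off-diagonal bounds using standard chaining and Markov-property arguments of the type used in the sub-Gaussian heat kernel literature. In one dimension $\beta=d_f=1$, so $d_w=1+1/\alpha$, and the factor $(D^{1+1/\alpha}/t)^\alpha$ in the statement is exactly $(D^{d_w}/t)^{1/(d_w-1)}$. Throughout, I write $D=|x-y|$.

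For the upper bound, I would apply the strong Markov property at the exit time $\tau$ of $X^\nu$ from the open interval $(x-D/2,x+D/2)$. Since $y$ lies outside this interval,
\[p_t^\nu(x,y)=E_x^\nu\bigl[p^\nu_{t-\tau}(X^\nu_\tau,y);\,\tau\leq t\bigr]\leq \sup_{0\leq s\leq t,\,z\in\mathbb{R}}p^\nu_s(z,y)\cdot P^\nu_x(\tau\leq t).\]
The exit probability is controlled by the quenched sub-Gaussian estimate $\exp(-c(D^{1+1/\alpha}/t)^\alpha)$ of Theorem \ref{qexit}. The remaining supremum is bounded by Cauchy--Schwarz, $p^\nu_s(z,y)\leq p^\nu_s(z,z)^{1/2}p^\nu_s(y,y)^{1/2}$, with \eqref{qodpoly1} applied at the fixed endpoint $y$ producing the on-diagonal $t^{-d_s/2}$ factor, and a careful interpolation between \eqref{qodpoly1} at $y$ and the uniform-over-compacts bound \eqref{qodpoly0} along the (almost sure) range of $X^\nu$ on $[0,t]$ contributing the stated $(\log|\log t|)^{(1-\alpha)/\alpha}$ correction.

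For the lower bound, the approach is a chaining argument along the interval from $x$ to $y$. Choose $n\asymp(D^{1+1/\alpha}/t)^\alpha$ equally spaced intermediate points $x=z_0<z_1<\cdots<z_n=y$, and apply Chapman--Kolmogorov to obtain
\[p_t^\nu(x,y)\geq \int_{A_1}\cdots\int_{A_{n-1}}\prod_{i=0}^{n-1}p^\nu_{t/n}(z'_i,z'_{i+1})\,\prod_{j=1}^{n-1}\nu(dz'_j),\]
where each $A_j$ is a small neighbourhood of $z_j$ of known $\nu$-mass. A near-diagonal lower bound on each factor is derived by combining the quenched exit-time lower bound for reaching a ball of radius $\sim(t/n)^{1/d_w}$ with the on-diagonal lower bound \eqref{qodpoly2}. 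The main obstacle is to prevent the logarithmic corrections from proliferating: a naive product would give $|\log t|^{-3n(1+\varepsilon)/\alpha}$ in place of the single $|\log t|^{-3(1+\varepsilon)/\alpha}$ claimed. I would resolve this by applying the log-corrected bound only at the fixed endpoints (to which \eqref{qodpoly2} applies $\mathbf{P}$-a.s.), chaining through intermediate scales with plain near-diagonal estimates valid $\nu$-almost everywhere (away from the atoms of $\nu$), and absorbing the anomalously small $\nu$-masses of the $A_j$ through a single Borel--Cantelli step governed by the stable-type volume fluctuations of $\nu$. Managing this interplay between the chaining depth and the fluctuation scales of the environment is the delicate part of the argument.
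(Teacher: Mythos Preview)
Your upper bound sketch has a genuine gap: writing $p^\nu_t(x,y)\leq \sup_{0\leq s\leq t,\,z}p^\nu_s(z,y)\cdot P^\nu_x(\tau\leq t)$ is vacuous, since $p^\nu_s(z,y)$ blows up as $s\to 0$ and the supremum is infinite. The paper avoids this by first applying Chapman--Kolmogorov at time $t/2$ and splitting over the half-spaces $H_{x,y}$, $H_{y,x}$; conditioning on the hitting time $\tau_m$ of the midpoint $m=(x+y)/2$ then leaves a remaining time in $[t/2,t]$, so that Cauchy--Schwarz and monotonicity give $p^\nu_{t-s}(m,y)\leq \sqrt{p^\nu_{t/2}(m,m)\,p^\nu_{t/2}(y,y)}$. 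Crucially, $m$ and $y$ are \emph{fixed} points (hence $\mathbf{P}$-a.s.\ non-atoms), so the \emph{local} on-diagonal bound of Theorem~\ref{localhk}(1) applies directly and yields the $(\log|\log t|)^{(1-\alpha)/\alpha}$ factor without any interpolation against the weaker uniform bound \eqref{qodpoly0}. Your own stopping set $\partial(x-D/2,x+D/2)$ also consists of two fixed points, so the interpolation you describe is unnecessary; the real issue is controlling the remaining time.

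Your lower bound is substantially harder than it needs to be, and the ``delicate part'' you flag is in fact the gap. The paper bypasses the chaining entirely via the one-line estimate (Lemma~\ref{lowergen})
\[
p^\nu_t(x,y)\;\geq\; P^\nu_x(\tau_y\leq t/2)\,p^\nu_t(y,y),
\]
obtained from the strong Markov property at the hitting time of the single point $y$ together with continuity of the heat kernel. Since the quenched hitting-time lower bound of Theorem~\ref{qexit} already encapsulates the spatial chaining (and its environment-averaging Borel--Cantelli step), and the local on-diagonal lower bound of Theorem~\ref{localhk}(2) applies at the fixed point $y$, the product gives the stated estimate with a \emph{single} factor $|\log t|^{-3(1+\varepsilon)/\alpha}$. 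Your chaining with $n\asymp(D^{1+1/\alpha}/t)^\alpha$ intermediate points would indeed accumulate $n$ logarithmic corrections, and the mechanism you propose for absorbing them is not spelled out; the point is that no second chaining is needed once Theorem~\ref{qexit} is in hand.
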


The remainder of the article is organised as follows. In Section \ref{framesec} we provide further background about the resistance form setting in which we are working. In Section \ref{volumesec} we establish various estimates of the masses of resistance balls with respect to the FIN measure, which will be a key ingredient for our heat kernel estimates. In Section \ref{qodsec}, we deduce our quenched on-diagonal heat kernel estimates, which yield the results at (\ref{qodpoly0})-(\ref{qodpoly3}). Section \ref{annsec} contains the proof of the annealed heat kernel bounds contained in Theorem \ref{annthm}. Since the arguments used to prove these are closely related to the proofs of exit time bounds for $X^\nu$, we also include annealed exit time bounds in this section, which extend the previously established results for the one-dimensional FIN diffusion to our more general setting. Finally, in Section \ref{qoffd} we study the quenched behaviour of the off-diagonal part of the heat kernel
in one dimension.

\section{Framework}\label{framesec}

In \cite{Kig} the notion of a resistance form was introduced to capture a natural class of objects in which the electrical resistance is a metric and the associated diffusions are point recurrent.

\begin{defn} [{\cite[Definition 3.1]{Kig}}]
Let $F$ be a non-empty set. A pair $(\mathcal{E},\mathcal{F})$ is called a \emph{resistance form} on $F$ if it satisfies the following five conditions.
\begin{description}
  \item[RF1] $\mathcal{F}$ is a linear subspace of the collection of functions $\{f:F\rightarrow\mathbb{R}\}$ containing constants, and $\mathcal{E}$ is a non-negative symmetric quadratic form on $\mathcal{F}$ such that $\mathcal{E}(f,f)=0$ if and only if $f$ is constant on $F$.
  \item[RF2] Let $\sim$ be the equivalence relation on $\mathcal{F}$ defined by saying $f\sim g$ if and only if $f-g$ is constant on $F$. Then $(\mathcal{F}/\sim,\mathcal{E})$ is a Hilbert space.
  \item[RF3] If $x\neq y$, then there exists a $f\in \mathcal{F}$ such that $f(x)\neq f(y)$.
  \item[RF4] For any $x,y\in F$,
  \begin{equation}\label{resdef}
  R(x,y):=\sup\left\{\frac{\left|f(x)-f(y)\right|^2}{\mathcal{E}(f,f)}:\:f\in\mathcal{F},\:\mathcal{E}(f,f)>0\right\}<\infty.
  \end{equation}
  \item[RF5] If $\bar{f}:=(f \wedge 1)\vee 0$, then $\bar{f}\in\mathcal{F}$ and $\mathcal{E}(\bar{f},\bar{f})\leq\mathcal{E}({f},{f})$ for any $f\in\mathcal{F}$.
\end{description}
\end{defn}

The function $R(x,y)$ defined in \eqref{resdef} can be rewritten as
\[
R(x,y)=\left(\inf\left\{\mathcal{E}(f,f):\:
f\in \mathcal{F},\: f(x)=1,\: f(y)=0\right\}\right)^{-1},
\]
which is the effective resistance between $x$ and $y$.
This is a metric on $F$ \cite[Proposition 3.3]{Kig}, which we call the \emph{resistance metric} associated with
the form $(\mathcal{E},\mathcal{F})$. We define the open ball centred at $x$ with radius $r$ in the resistance metric by $B_R(x,r):=\left\{y\in F:\:R(x,y)<r\right\}$. Throughout the paper we assume that we have a non-empty set $F$ equipped with a resistance form $(\mathcal{E},\mathcal{F})$ such that the closure of $B_R(x,r)$, denoted $\bar{B}_R(x,r)$, is compact for any $x\in F$ and $r>0$. (Note the latter condition ensures $(F,R)$ is complete, separable and locally compact.) We will also restrict our attention to resistance forms that are regular in the following sense.

\begin{defn} [{\cite[Definition 6.2]{Kig}}]\label{regulardef}
Let $C_0(F)$ be the collection of compactly supported, continuous (with respect to $R$) functions on $F$, and $\|\cdot\|_F$ be the supremum norm for functions on $F$. A resistance form $(\mathcal{E},\mathcal{F})$ on $F$ is called \emph{regular} if and only if $\mathcal{F}\cap C_0(F)$ is dense in $C_0(F)$ with respect to $\|\cdot\|_F$.
\end{defn}

We state two fundamental results that we use in a few places. Firstly, a simple consequence of \eqref{resdef},
is the H\"older continuity of functions in the domain of the Dirichlet form;
\begin{equation}\label{eq:hcont}
 |f(x)-f(y)|^2 \leq R(x,y) \mathcal{E}(f,f), \;\; \forall x,y\in F, \;\; f\in \mathcal{F}.
\end{equation}
We also recall that from \cite[Theorem 10.4]{Kig} that for $\mathbf{P}$-a.e.\ realisation of $\nu$ we have a jointly continuous heat kernel $(p^\nu_t(x,y))_{x,y\in F,t>0}$. This satisfies the following bound, which is a simple modification of \cite{Barlow}~(4.17),
\begin{equation}\label{eq:dformhkbd}
\mathcal{E}(p_t(x,\cdot),p_t(x,\cdot)) \leq \frac{p_t(x,x)}{t}.
\end{equation}

We conclude this section by noting that the doubling property of $v$ implies that we have
a constant $c>0$ such that
\begin{equation}\label{eq:polygrowth}
v(r)\geq c r^\gamma, \;\; \forall r\in (0,R_F+1).
\end{equation}
Here $\gamma=\log c_d/\log 2$, where $c_d$ is the constant appearing in the definition of UVD.

\section{Volume growth estimates}\label{volumesec}

Before proceeding to study the heat kernel, in this section we explore the behaviour of the FIN measure. Throughout we suppose that UVD holds. The FIN measure $\nu$ is closely related to an $\alpha$-stable L\'evy process, and we will use this connection to provide estimates on the local and uniform volume growth. We write $V(x,r) = \nu(B_R(x,r))$ for the volume growth function of balls in the resistance metric under the FIN measure. In the following, we let $\mathcal{L}$ be an $\alpha$-stable subordinator, and recall that we can construct this by setting $\mathcal{L}_t=\sum_{i}v_i\mathbf{1}_{\{t_i\leq t\}}$, where $(v_i,t_i)$ are the points of a Poisson process on $(0,\infty)\times \mathbb{R}_+$ with intensity $\alpha v^{-\alpha-1}dvdt$.

\begin{lemma}\label{lem:nuL} It is possible to couple $(\mathcal{L}_t)_{t\geq 0}$ and $(V(\rho,r))_{r\geq 0}$ so that, $\mathbf{P}$-a.s.,
\[ \mathcal{L}_{c_l v(r)} \leq V(\rho,r) \leq \mathcal{L}_{c_u v(r)},\qquad \forall r\in(0,R_F+1).\]
\end{lemma}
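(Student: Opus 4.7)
The key observation is that $V(\rho, \cdot)$ depends on the FIN Poisson point process only through the radial marks $(v_i, R(\rho, x_i))$, and by the mapping theorem these form a Poisson point process on $(0,\infty) \times [0, R_F]$ with intensity $\alpha v^{-\alpha-1}\,dv\,\mu_\rho(dr)$, where $\mu_\rho$ denotes the pushforward of $\mu$ under $R(\rho, \cdot)$. Writing $F_\rho(r) := \mu(B_R(\rho, r)) = \mu_\rho([0, r))$, the UVD hypothesis gives $c_l v(r) \leq F_\rho(r) \leq c_u v(r)$ for all $r \in (0, R_F + 1)$. The strategy will be to build $\mathcal{L}$ on the same probability space so that $V(\rho, r) = \mathcal{L}_{F_\rho(r)}$ (modulo a natural left/right-continuity issue), after which the desired sandwich follows at once from the monotonicity of $\mathcal{L}$ applied to the UVD bounds on $F_\rho(r)$.

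To realise this coupling, the next step is to convert the radial PPP into a Lebesgue-intensity PPP. On an enlarged probability space, one attaches to each radial point $(v_i, r_i)$ an independent uniform $[0,1]$ variable $U_i$ and sets $t_i := F_\rho(r_i) + U_i\,\mu_\rho(\{r_i\})$. At continuity points of $F_\rho$ (equivalently, off the atoms of $\mu_\rho$) this reduces to $t_i = F_\rho(r_i)$; at an atom $r^*$ of $\mu_\rho$ the $U_i$ spread the corresponding points uniformly across the interval $[F_\rho(r^*), F_\rho(r^*+)]$. A direct application of the marking/mapping theorem then shows that $\{(v_i, t_i)\}$ is a Poisson point process on $(0,\infty) \times [0, F_\rho(R_F+))$ of intensity $\alpha v^{-\alpha-1}\,dv\,dt$; adjoining an independent PPP of the same intensity on the remaining half-line produces the process whose associated subordinator is $\mathcal{L}_t := \sum_j v_j \mathbf{1}_{\{t_j \leq t\}}$.

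By construction, the point $(v_i, r_i)$ contributes to $V(\rho, r) = \sum_i v_i \mathbf{1}_{\{r_i < r\}}$ exactly when $t_i < F_\rho(r)$, which yields $V(\rho, r) = \mathcal{L}_{F_\rho(r)-}$. Monotonicity of $\mathcal{L}$ combined with $c_l v(r) \leq F_\rho(r) \leq c_u v(r)$ then gives $\mathcal{L}_{c_l v(r)-} \leq V(\rho, r) \leq \mathcal{L}_{c_u v(r)}$. The residual gap between $\mathcal{L}_{c_l v(r)-}$ and $\mathcal{L}_{c_l v(r)}$ occurs only when $c_l v(r)$ is a jump time of $\mathcal{L}$; it can be closed either by a harmless relabelling of $c_l$ or by running the same argument with closed balls $\bar{B}_R(\rho, r)$, which sandwich the open ones and differ on a set of $r$'s that plays no further role downstream.

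The main obstacle will be the careful verification that the marking construction really yields a PPP of intensity $\alpha v^{-\alpha-1}\,dv\,dt$ in the presence of atoms of $\mu_\rho$ (which are possible whenever a resistance sphere $\{y : R(\rho, y) = r\}$ carries positive $\mu$-mass), together with the bookkeeping around the left-continuity of $F_\rho$ versus the right-continuity of $\mathcal{L}$. Neither issue is conceptually deep, but both need to be handled so that the bound holds simultaneously in $r \in (0, R_F+1)$, rather than merely at each fixed $r$.
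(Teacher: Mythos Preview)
Your approach is essentially the same as the paper's: project the Poisson points to their radial coordinates $R(\rho,x_i)$, then push forward via the cumulative volume function $r\mapsto\mu(B_R(\rho,r))$ to obtain a Lebesgue-intensity PPP whose subordinator is $\mathcal{L}$, giving $V(\rho,r)=\mathcal{L}_{\mu(B_R(\rho,r))}$ and hence the sandwich by UVD. The paper simply writes the change of variables $t_i=\hat\mu^{-1}([0,\tilde x_i))$ without comment, whereas you are more careful about the possibility that the radial pushforward $\mu_\rho$ has atoms (spheres of positive $\mu$-mass) and about the left/right-continuity mismatch; these are genuine technicalities the paper glosses over, so your extra bookkeeping is justified rather than superfluous.
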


\begin{proof}
By definition we have $V(\rho,r) = \int_{B_R(\rho,r)} \sum_i v_i \delta_{x_i}(dy)$,
where the points $(v_i,x_i)$ are a Poisson point process of intensity $\alpha v^{-\alpha-1}dv\mu(dx)$.
Let $\hat{\mu}$ be a measure on $\mathbb{R}_+$ given by $\hat{\mu}([0,s)) = \mu(B_R(\rho,s))$ for all $s>0$. Thus, by projecting the points $x_i\in F$ to $\tilde{x}_i = R(\rho,x_i)\in \mathbb{R}_+$, we have $V(\rho,r) = \int_0^r \sum_i v_i \delta_{\tilde{x}_i}(dy)$, where the points  $(v_i,\tilde{x}_i)$ are a Poisson point process of intensity
$\alpha v^{-\alpha-1}dv\hat{\mu}(d\tilde{x})$. Making the change of variables
$t_i=\hat{\mu}^{-1}([0,\tilde{x}_i))$, and noting that $(v_i,t_i)$ are Poisson points with intensity
$\alpha v^{-\alpha-1}dvdt$, this implies
\[V(\rho,r) = \int_0^r \sum_i v_i \delta_{\hat{\mu}(t_i)}(ds)
= \int_0^{\hat{\mu}(r)} \sum_i v_i \delta_{t_i}(ds)
= \mathcal{L}_{\mu(B_R(\rho,r))}.\]
Applying the UVD assumption concludes the proof.
\end{proof}

We now recall some basic facts about the sample paths of $\alpha$-stable subordinators, which we
will subsequently use to control the volume growth of our measure. For statements and proofs see
\cite[Chapter~III.4]{Ber}. Firstly, there is an integral test for the upper bound on the behaviour
of $\mathcal{L}$ near 0 in that, $\mathbf{P}$-almost surely,
\[ \limsup_{t\to 0} \frac{\mathcal{L}_t}{h_t} = \left\{ \begin{array}{ll} \infty, & \mbox{if }
\int_0^1 h_t^{-\alpha} dt = \infty, \\
0, & \mbox{if } \int_0^1 h_t^{-\alpha} dt < \infty. \end{array} \right. \]
In particular we have for any positive $c$ that, $\mathbf{P}$-almost surely, there is an infinite
sequence of times $\{t_n\}_{n=1}^{\infty}$, with $t_n\to 0$ as $n\to\infty$, such that
\[ \mathcal{L}_{t_n} \geq c t_n^{1/\alpha} |\log{t_n}|^{1/\alpha}, \qquad \forall n\in\mathbb{N}, \]
and for any $\varepsilon>0$, there is a constant $C$ such that, $\mathbf{P}$-almost surely,
\[ \mathcal{L}_t \leq C t^{1/\alpha} |\log{t}|^{(1+\varepsilon)/\alpha}, \qquad \forall t\in (0,1).\]
For the lower bounds we have smaller fluctuations. Indeed, \cite[III~Theorem~11]{Ber} states
that, $\mathbf{P}$-almost surely,
\[ \liminf_{t\to 0} \frac{\mathcal{L}_t}{t^{1/\alpha}(\log|\log{t}|)^{1-1/\alpha}} = C_{\alpha}
(=\alpha(1-\alpha)^{(1-\alpha)/\alpha}). \]
Combining these results with Lemma~\ref{lem:nuL} we have the following lemma, which summarizes the
volume growth of balls in the FIN measure from a $\mu$-typical point, where there is no atom.

\begin{lemma}\label{lem:volgrowth}
(1) For any $\varepsilon>0$ there exists a $c>0$ such that
\[ V(\rho,r) \leq c v(r)^{1/\alpha}|\log{v(r)}|^{(1+\varepsilon)/\alpha}, \qquad \forall r<R_F,
\qquad \mathbf{P}\mbox{-a.s.} \]
(2) There is a $c>0$ and an infinite sequence $\{r_n\}$ with $r_n \to 0$ as $n\to\infty$ such that
\[ V(\rho,r_n) \geq c v(r_n)^{1/\alpha}|\log{v(r_n)}|^{1/\alpha}, \qquad \forall n\in\mathbb{N},
\qquad\mathbf{P}\mbox{-a.s.} \]
(3) There is a $c>0$ such that
\[ V(\rho,r) \geq c v(r)^{1/\alpha}(\log{|\log{v(r)}|})^{1-1/\alpha}, \qquad  \forall r<R_F,
\qquad \mathbf{P}\mbox{-a.s.} \]
(4) There is a $c$ and an infinite sequence $\{r_n\}$ with $r_n \to 0$ as $n\to\infty$ such that
\[ V(\rho,r_n) \leq c v(r_n)^{1/\alpha}(\log|\log{v(r_n)}|)^{1-1/\alpha}, \qquad \forall n\in\mathbb{N},
\qquad \mathbf{P}\mbox{-a.s.} \]
\end{lemma}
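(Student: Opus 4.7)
By Lemma~\ref{lem:nuL} we have $\mathcal{L}_{c_l v(r)}\le V(\rho,r)\le \mathcal{L}_{c_u v(r)}$, so the plan is to transfer each of the four sample-path estimates for the $\alpha$-stable subordinator $\mathcal{L}$ recalled just above the lemma through this coupling. Since $\mu$ is non-atomic and $\mu(B_R(\rho,r))\asymp v(r)$, we have $v(r)\to 0$ as $r\to 0$; consequently the multiplicative constants $c_l,c_u$ are irrelevant inside the logarithms, as $|\log(c_\star v(r))|=|\log v(r)|+O(1)$ and $\log|\log(c_\star v(r))|=\log|\log v(r)|+o(1)$ for $c_\star\in\{c_l,c_u\}$, so they may be absorbed into the overall multiplicative constant.

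For the uniform statements (1) and (3), I substitute $t=c_u v(r)$, respectively $t=c_l v(r)$, into the corresponding uniform-in-$t$ subordinator bounds. Part (1) combines $V(\rho,r)\le \mathcal{L}_{c_u v(r)}$ with the integral-test bound $\mathcal{L}_t\le Ct^{1/\alpha}|\log t|^{(1+\varepsilon)/\alpha}$. Part (3) combines $V(\rho,r)\ge \mathcal{L}_{c_l v(r)}$ with the consequence of the Bertoin $\liminf$ law that $\mathcal{L}_t\ge (C_\alpha/2)t^{1/\alpha}(\log|\log t|)^{1-1/\alpha}$ for all sufficiently small $t$, $\mathbf{P}$-a.s. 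For those $r$ bounded away from $0$ (but below $R_F$) the asserted bounds are trivial, since the right-hand sides are uniformly bounded on such a range while $V(\rho,r)$ is $\mathbf{P}$-a.s.\ positive (as $\nu$ has full support), so shrinking the constant $c$ absorbs this range.

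For the subsequence statements (2) and (4), I take the sequence $t_n\to 0$ supplied by the corresponding subordinator result and choose, for each $n$, a dyadic $r_n:=2^{-k_n}$ such that $v(r_n)\asymp t_n$. For (2) let $k_n$ be the largest integer with $c_l v(2^{-k_n})\ge t_n$; then $t_n\le c_l v(r_n)$ by choice of $k_n$, while $c_l v(r_n)\le c_l c_d v(2^{-(k_n+1)})<c_d t_n$ by the doubling of $v$, so that $v(r_n)\asymp t_n/c_l$. Monotonicity of $\mathcal{L}$ yields $V(\rho,r_n)\ge \mathcal{L}_{c_l v(r_n)}\ge \mathcal{L}_{t_n}$, and combining with $\mathcal{L}_{t_n}\ge ct_n^{1/\alpha}|\log t_n|^{1/\alpha}$ and $v(r_n)\asymp t_n$ gives the claim. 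Part (4) is symmetric, taking $k_n$ to be the smallest integer with $c_u v(2^{-k_n})\le t_n$ and using the upper half of the coupling. That $r_n\to 0$ follows in both cases from the fact that $t_n\to 0$ and $v(r)\to 0$ only as $r\to 0$.

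The only non-routine point, such as it is, is ensuring $v(r_n)\asymp t_n$ in spite of $v$ being neither continuous nor strictly monotone; the doubling of $v$ is precisely what makes this possible, controlling the jump size of $v$ across any dyadic scale. Given this, the four parts of the lemma amount to a fairly mechanical translation of the classical sample path theorems for $\mathcal{L}$.
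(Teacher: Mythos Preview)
Your proposal is correct and follows exactly the approach the paper intends: the paper offers no proof beyond the sentence ``Combining these results with Lemma~\ref{lem:nuL} we have the following lemma'', and your write-up fills in precisely those details. Your handling of (2) and (4) --- picking dyadic $r_n$ so that $v(r_n)\asymp t_n$ via volume doubling, and justifying $r_n\to 0$ through $v(r)\to 0$ only as $r\to 0$ --- is the natural way to make the subsequence transfer rigorous, and is more careful than the paper itself.
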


The uniform behaviour of balls is different, as the atoms play a role. We let $G\subseteq F$ be a compact subset with $\mu(G)>0$.

\begin{lemma}
There exist random constants $0<c_1,c_2$ such that
\[ c_1 \leq \sup_{x\in G} V(x,r) \leq c_2, \qquad  \forall r<R_F, \qquad  \mathbf{P}\mbox{-a.s.} \]
\end{lemma}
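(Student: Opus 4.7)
The plan is to handle the two inequalities separately, with the lower bound reducing to the existence of a single atom of $\nu$ lying in $G$, and the upper bound following from the local finiteness of $\nu$ together with the $\alpha$-stable subordinator coupling used in the proof of Lemma~\ref{lem:nuL}.

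For the lower bound I would exhibit one fixed atom of $\nu$ in $G$. Because $\mu(G)>0$ and the Poisson point process $(v_i,x_i)$ defining $\nu$ has intensity $\alpha v^{-1-\alpha}\,dv\,\mu(dx)$, for every $\varepsilon>0$ the number of indices with $x_i\in G$ and $v_i\geq\varepsilon$ is Poisson with finite positive parameter $\mu(G)\varepsilon^{-\alpha}$. Letting $\varepsilon\downarrow 0$ along a countable sequence, $\mathbf{P}$-a.s.\ there is at least one atom $(v^*,x^*)$ with $x^*\in G$ and $v^*>0$. Since $x^*\in B_R(x^*,r)$ for every $r>0$, the definition of $\nu$ gives $V(x^*,r)\geq v^*$, and hence $\sup_{x\in G}V(x,r)\geq v^*=:c_1>0$ uniformly in $r$.

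For the upper bound, let $D:=\sup_{x,y\in G}R(x,y)$, which is finite by compactness of $G$, and fix any $x_0\in G$. The triangle inequality yields $B_R(x,r)\subseteq B_R(x_0,r+D)\subseteq B_R(x_0,R_F+D)$ for every $x\in G$ and $r<R_F$, so
\[ \sup_{x\in G}V(x,r)\leq \nu\bigl(B_R(x_0,R_F+D)\bigr)=:c_2. \]
The ball $B_R(x_0,R_F+D)$ is bounded in $(F,R)$, hence has finite $\mu$-mass by the standing local finiteness hypothesis on $\mu$. Repeating the projection and change-of-variables computation from the proof of Lemma~\ref{lem:nuL} identifies $\nu\bigl(B_R(x_0,R_F+D)\bigr)$ (after coupling) with $\mathcal{L}_{\mu(B_R(x_0,R_F+D))}$, which is $\mathbf{P}$-a.s.\ finite since $\mathcal{L}$ is an $\alpha$-stable subordinator with $\alpha\in(0,1)$ evaluated at a finite time.

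The only substantive step needing care is the $\mathbf{P}$-a.s.\ finiteness of $c_2$ (as opposed to merely having a finite expectation, which fails because $v\mapsto v$ is not integrable against the intensity $\alpha v^{-1-\alpha}dv$ near infinity); for this the reduction to an $\alpha$-stable subordinator at a finite time, furnished by Lemma~\ref{lem:nuL}, is the essential tool. Everything else is soft and does not interact with the details of the underlying resistance form.
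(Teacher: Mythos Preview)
Your proof is correct and follows essentially the same approach as the paper. For the lower bound you argue exactly as the paper does: exhibit a single atom of $\nu$ located in $G$ and observe that its weight gives a uniform-in-$r$ lower bound for $\sup_{x\in G}V(x,r)$. For the upper bound the paper is terser, writing only ``$\nu(G)<\infty$, $\mathbf{P}$-a.s.''; you are more explicit, enclosing every $B_R(x,r)$ (for $x\in G$, $r<R_F$) in a single fixed bounded set via the triangle inequality and then checking that $\nu$ of that set is a.s.\ finite. One small remark: the detour through the subordinator coupling is not really needed for that last step, since the paper already records in the introduction that $\nu$ is $\mathbf{P}$-a.s.\ a locally finite Borel measure, so any bounded set (and in particular any closed bounded, hence compact, ball) automatically has finite $\nu$-mass. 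Your argument is not wrong, just slightly more elaborate than necessary.
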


\begin{proof} The upper bound is clear as $\nu(G)<\infty$, $\mathbf{P}$-a.s. For the lower bound, we note that for any Poisson point $(v_i,x_i)$ with $x_i\in G$, we have $ \sup_{x\in G} V(x,r) \geq \nu(B_R(x_i,r)) \geq v_i >0$ for all $r>0$.
\end{proof}

More challenging is to estimate the uniform infimum of the volume. For this we state
the result obtained in \cite{Haw} for the left tail of the law of the one-dimensional subordinator.
For any fixed $t$, as $x\to 0$,
\begin{equation}
\mathbf{P}(\mathcal{L}_t\leq xt^{1/\alpha}) \sim C_1 x^{\alpha/(1(1-\alpha))} \exp(-C_2 x^{-\alpha/(1-\alpha)}). \label{eq:subtail}
\end{equation}
where $C_1 = (2\pi(1-\alpha)\alpha^{\alpha/(2(1-\alpha))})^{-1/2}$, and $C_2 = (1-\alpha) \alpha^{\alpha/(1-\alpha)}$.
We also remark that the upper tail has a simple upper bound in that there is a constant $C_3$ such that
\begin{equation}\label{eq:uptail}
\mathbf{P}(\mathcal{L}_t\geq xt^{1/\alpha}) \leq C_3 x^{-\alpha}, \qquad \forall t>0, x>0. \end{equation}

\begin{lemma}\label{volasymp}
There exist constants $c_1, c_2$ such that, $\mathbf{P}$-a.s.,
\[ c_1 \leq  \liminf_{r \to 0} \frac{\inf_{x \in G} V(x,r)}{v(r)^{1/\alpha}|\log{v(r)}|^{1-1/\alpha}}
 \leq  \limsup_{r \to 0} \frac{ \inf_{x \in G} V(x,r)}{v(r)^{1/\alpha}|\log{v(r)}|^{1-1/\alpha}} \leq c_2.\]
\end{lemma}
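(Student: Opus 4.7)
The strategy is to transfer the left-tail asymptotic \eqref{eq:subtail} from the stable subordinator to the ball masses $V(x,r)$ via a packing/covering argument on $G$, then apply Borel--Cantelli along a dyadic sequence $r_k=2^{-k}$ and extend to all small $r$ by monotonicity. The basic distributional input, which follows from the Poisson structure of $\nu$, is that $V(x,r)$ has the law of $\mathcal{L}_{\mu(B_R(x,r))}$ for an $\alpha$-stable subordinator $\mathcal{L}$ (the polar projection in Lemma~\ref{lem:nuL} works from any base point), and that the variables $V(x_i,r)$ are independent whenever the balls $B_R(x_i,r)$ are pairwise disjoint. Combining this with the scaling $\mathcal{L}_t\stackrel{d}{=}t^{1/\alpha}\mathcal{L}_1$, with \eqref{eq:subtail}, and with the UVD bound $\mu(B_R(x,r))\asymp v(r)$ gives, uniformly in $x\in F$ as $r\to 0$,
\[
\mathbf{P}\bigl(V(x,r)\leq c_0 v(r)^{1/\alpha}|\log v(r)|^{1-1/\alpha}\bigr) = v(r)^{A(c_0)(1+o(1))},
\]
where $A(c_0)=c_{*}\,c_0^{-\alpha/(1-\alpha)}$ for an explicit $c_{*}>0$ depending on $C_2$ and the UVD constants (the polynomial prefactor in \eqref{eq:subtail} contributes only $v(r)^{o(1)}$ since it is a power of $|\log v(r)|$). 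The two assertions of the lemma correspond to taking $c_0$ with $A(c_0)<1$ (upper bound) or $A(c_0)>1$ (lower bound).

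For the upper bound on the $\limsup$, fix $c_2$ with $A(c_2)<1$ and let $\{x_i^{(k)}\}_{i=1}^{M_k}$ be a maximal $2r_k$-separated set in $G$, so that UVD gives $M_k\gtrsim v(r_k)^{-1}$ and the balls $B_R(x_i^{(k)},r_k)$ are disjoint. Writing $p_k:=v(r_k)^{A(c_2)(1+o(1))}$ for the common lower bound on the probability that a single $V(x_i^{(k)},r_k)$ falls below $c_2 v(r_k)^{1/\alpha}|\log v(r_k)|^{1-1/\alpha}$, independence gives
\[
\mathbf{P}\!\left(\inf_{x\in G}V(x,r_k) > c_2 v(r_k)^{1/\alpha}|\log v(r_k)|^{1-1/\alpha}\right)\leq (1-p_k)^{M_k}\leq \exp(-p_k M_k),
\]
and $p_k M_k\gtrsim v(r_k)^{A(c_2)-1+o(1)}\to\infty$, so the probabilities are summable in $k$. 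For the lower bound on the $\liminf$, fix $c_1$ with $A(c_1)>1$ and let $\{z_j^{(k)}\}_{j=1}^{N_k}$ be a minimal $r_k/2$-cover of $G$, so $N_k\lesssim v(r_k)^{-1}$ by UVD. For every $x\in G$ there is some $j$ with $R(x,z_j^{(k)})<r_k/2$, whence $B_R(z_j^{(k)},r_k/2)\subseteq B_R(x,r_k)$ and $V(x,r_k)\geq V(z_j^{(k)},r_k/2)$. A union bound then yields
\[
\mathbf{P}\!\left(\inf_{x\in G}V(x,r_k) < c_1 v(r_k)^{1/\alpha}|\log v(r_k)|^{1-1/\alpha}\right)\lesssim N_k\cdot v(r_k)^{A(c_1)(1+o(1))}\lesssim v(r_k)^{A(c_1)-1+o(1)},
\]
which is again summable.

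Two applications of Borel--Cantelli now control $\inf_{x\in G}V(x,r_k)$ at the required order for all large $k$; the bound is extended to all sufficiently small $r$ by observing that $r\mapsto\inf_{x\in G}V(x,r)$ is non-decreasing while the normalising factor $v(r)^{1/\alpha}|\log v(r)|^{1-1/\alpha}$ varies only by a bounded multiplicative constant on each dyadic interval $[r_{k+1},r_k]$, thanks to the doubling of $v$. The main obstacle is the calibration between the packing/covering count ($\asymp v(r)^{-1}$) and the single-ball tail $v(r)^{A(c_0)}$; the specific log-exponent $1-1/\alpha$ in the normalisation is precisely what makes these two scales comparable, and one finds the required constants because $A(c_0)$ sweeps continuously over $(0,\infty)$ as $c_0$ varies, with small $c_0$ giving $A(c_0)>1$ and large $c_0$ giving $A(c_0)<1$.
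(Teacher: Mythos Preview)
Your proposal is correct and follows essentially the same route as the paper: a packing argument with independence for the $\limsup$ upper bound, a covering argument with a union bound for the $\liminf$ lower bound, the subordinator left-tail asymptotic \eqref{eq:subtail} transferred via Lemma~\ref{lem:nuL} and UVD, Borel--Cantelli along the dyadic scales $r_k=2^{-k}$, and extension to all small $r$ by monotonicity and doubling. The only cosmetic difference is that the paper phrases the threshold directly in terms of the constant $C_2$ from \eqref{eq:subtail} (choosing $c_*>C_2^{(1-\alpha)/\alpha}$ for the upper bound and $x<C_2^{(1-\alpha)/\alpha}$ for the lower), whereas you package this as $A(c_0)\lessgtr 1$; these are equivalent formulations.
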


\begin{proof}
We first define the minimal number of balls in a cover of a set $A$
\[ N(A,r) = \min\left\{k:\exists (y_i)_{i=1}^k\mbox{ such that }A \subseteq \bigcup_i B_R(y_i,r)\right\},\]
and the maximal number of disjoint balls with centres in a set $A$
\[ N^d(A,r) = \max\left\{k:\exists (y_i)_{i=1}^k\mbox{ such that } y_i\in A\mbox{ and }
B_R(y_i,r)\cap B_R(y_j,r)=\emptyset, j\neq i\right\}. \]

For the lower bound, we see that from UVD,
\[ \mu(G) \leq \sum_{i=1}^{N(G,r)} \mu(B_R(y_i,r)) \leq c_u N(G,r) v(r). \]
so that $N(G,r) \geq c_u\mu(G) v(r)^{-1}$.

For the upper bound, given a maximal collection of $N^d(G,r/2)$ disjoint balls with centres at points $\{y_i\}$ in $G$, any point $x\in G$ must be within a distance $r/2$ of a ball, otherwise we could include another ball in our collection. Thus we have a cover of $G$ with $N^d(G,r/2)$ balls by using the same set of centres $\{y_i\}$ and with balls of double their radius. Hence $N(G,r) \leq N^d(G,r/2)$. Moreover, for $r<R_G:=\sup_{x,y\in G}R(x,y)$, as the measure is supported on $G$, using UVD
\[ \mu(G) \geq \sum_{i=1}^{N^d(G,r/2)}\mu(B_R(y_i,r/2)) \geq c_l N^d(G,r/2) v(r/2), \]
and so $N^d(G,r/2) \leq c_l\mu(G) v(r/2)^{-1}$.

We now establish our result. For convenience, we write $\phi(r) =v(r)^{1/\alpha}|\log{v(r)}|^{1-1/\alpha}$.
Let $A_{jk} = \{V(y_j,2^{-k})
\leq c_* \phi(2^{-k})\}$, where $c_*$ is a constant we will choose
later. By Lemma \ref{lem:nuL} and \eqref{eq:subtail}, we have
$\mathbf{P}(A_{jk}) \leq v(2^{-k})^{C_2c_*^{-\alpha/(1-\alpha)}}$ for any $j,k$.

As there are $N^d(G,2^{-k}) \geq cv(2^{-k})^{-1}$ disjoint balls of radius $2^{-k}$ in $G$, we have
\begin{eqnarray*}
\mathbf{P}\left(\inf_{x\in G} V(x,2^{-k}) > c_*\phi(2^{-k}) \right) &\leq &
\mathbf{P}\left(\inf_{i} V(y_i,2^{-k}) > c_*\phi(2^{-k})\right) \\
&=& \mathbf{P}\left(\bigcap_{j=1}^{N^d(G,2^{-k})}A_{jk}^c\right) \\
&=& \prod_{j=1}^{N^d(G,2^{-k})}(1-\mathbf{P}(A_{jk})) \\
&\leq& \exp(-cv(2^{-k})^{-1+C_2c_*^{-\alpha/(1-\alpha)}}).
\end{eqnarray*}
Thus, as $v$ grows at least polynomially \eqref{eq:polygrowth}, by choosing $c_*>C_2^{(1-\alpha)/\alpha}$ sufficiently
large, we obtain from a Borel-Cantelli argument that
\[ \limsup_{k\to\infty} \inf_{x\in G}\frac{V(x,2^{-k})}{\phi(2^{-k})} \leq c_*, \qquad \mathbf{P}\mbox{-a.s.}\]
It is easy to check from this that there is a constant $c$ such that
\[ \limsup_{r\to 0} \inf_{x\in G}\frac{V(x,r)}{\phi(r)}\leq c, \qquad  \mathbf{P}\mbox{-a.s.} \]

For the corresponding $\liminf$ result, we first note that there exists a collection of points $(y_i)_{i=1}^{N(G,2^{-k-1})}$
such that the balls $B_R(y_i,2^{-k-1})$ form a cover of $G$. For $2^{-k} \leq r <2^{-k+1}$, it holds that
\[ \inf_{x\in G} \frac{V(x,r)}{\phi(r)} \geq  \inf_{i=1,\dots,N(G,2^{-k-1})}  \frac{V(y_i,2^{-k-1})}
{\phi(2^{-k+1})}.\]
Using Lemma \ref{lem:nuL} and \eqref{eq:subtail} again, it is
straightforward to estimate
\begin{eqnarray*}
\mathbf{P}\left(\inf_{i=1,\dots,N(G,2^{-k-1})} \frac{V(y_i,2^{-k-1})}{\phi(2^{-k+1})} \leq x\right)
&\leq & \sum_{i=1}^{N(G,2^{-k-1})}
 \mathbf{P}\left( \frac{V(y_i,2^{-k-1})}{\phi(2^{-k+1})} \leq x\right) \\
&\leq & c v(2^{-k-1})^{-1+C_2x^{-\alpha/(1-\alpha)}}.
\end{eqnarray*}
As the volume function grows polynomially, provided we choose $x<C_2^{(1-\alpha)/\alpha}$, we can
sum this expression and, by Borel-Cantelli, see that there is a constant $c>0$ such that
\[ \liminf_{r\to 0} \inf_{x\in G} \frac{V(x,r)}{\phi(r)} \geq c, \qquad  \mathbf{P}\mbox{-a.s.}\]
\end{proof}

\section{Quenched on-diagonal heat kernel estimates}\label{qodsec}

We will write $P^{\nu}_\rho$ for the law of the FIN diffusion started from $\rho$ in the fixed environment $\nu$ and write $E^{\nu}_{\rho}$
for the expectation with respect to this measure. We now turn our attention to bounds for the quenched transition density $(p^\nu_t(x,y))_{x,y\in F,t>0}$, starting in this section with quenched on-diagonal estimates. In Section \ref{localsec} we derive pointwise estimates, and in Section \ref{globalsec} estimates that hold uniformly on compacts. Our arguments adapt techniques of \cite{Cro, Kum}, which develop heat kernel bounds for resistance forms.

\subsection{Local quenched heat kernel bounds}\label{localsec}

By results of \cite{Cro, Kum}, we know that, for the on-diagonal bounds of interest here, it will be sufficient to understand information about the volume growth of balls in the resistance metric. We note from \cite[Lemma 4.1]{Kum} that, for resistance balls under the UVD assumption on the base measure $\mu$, there is a constant $C_R<1$ such that for all $x\in F, r\in(0,R_F)$,
\[ C_R r \leq R(x,B_R(x,r)^c) \leq r.\]
Moreover, the argument of \cite[Proposition 4.1]{Kum} in the case of a measure satisfying the
UVD assumption is a local argument, and can be applied in our case to give the following.

\begin{lemma} \label{lem:diagub}
$\mathbf{P}$-a.s. we have
\begin{equation}
 p_{2rV(x,r)}^{\nu}(x,x) \leq \frac{2}{V(x,r)},\qquad\forall x\in F,\:r\in(0,R_F).\label{eq:hkub}
\end{equation}
\end{lemma}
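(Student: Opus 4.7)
I would follow the single-point version of the resistance-form heat kernel argument of \cite[Proposition 4.1]{Kum}, with the role played by $\mu$ in that paper taken over by $\nu$. As remarked after the construction of $X^{\nu}$, the FIN process may be viewed as Brownian motion on the resistance form $(\mathcal{E},\mathcal{F})$ regarded as a Dirichlet form on $L^2(F,\nu)$, so that both the H\"older bound \eqref{eq:hcont} and the energy estimate \eqref{eq:dformhkbd} apply with $p_t$ replaced by $p_t^{\nu}$.

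Fix $x\in F$ and $r\in(0,R_F)$, and write $t:=2rV(x,r)$, $M:=p_t^{\nu}(x,x)$, and $f(\cdot):=p_t^{\nu}(x,\cdot)$. Applying \eqref{eq:hcont} to $f$ for $y\in B_R(x,r)$ (so that $R(x,y)<r$), together with \eqref{eq:dformhkbd} in the form $\mathcal{E}(f,f)\leq M/t$, gives the oscillation bound
\[
|f(y)-M|^2 \;\leq\; R(x,y)\,\mathcal{E}(f,f) \;\leq\; \frac{rM}{t} \;=\; \frac{M}{2V(x,r)}, \qquad y\in B_R(x,r).
\]

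Next I would argue by contradiction. Suppose $M>2/V(x,r)$; then the right-hand side above is strictly less than $M^2/4$, so $f(y)>M/2$ throughout $B_R(x,r)$. Integrating against $\nu$ and using conservation of mass $\int_F p_t^{\nu}(x,y)\,\nu(dy)=1$ (which follows from $\nu$-reversibility and non-explosion of $X^{\nu}$, the latter being a consequence of recurrence) yields
\[
1 \;\geq\; \int_{B_R(x,r)} f(y)\,\nu(dy) \;>\; \frac{M}{2}\,V(x,r) \;>\; 1,
\]
a contradiction. Hence $M\leq 2/V(x,r)$, as required. The $\mathbf{P}$-a.s.\ uniformity over $x\in F$ and $r\in(0,R_F)$ is automatic, since the above reasoning is pointwise in $x$ on the $\mathbf{P}$-full-probability event on which $(p_t^{\nu}(x,y))$ exists and is jointly continuous.

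The only step demanding care — and what I would flag as the main obstacle — is justifying that \eqref{eq:hcont} and \eqref{eq:dformhkbd} transfer to the Dirichlet form governing $X^{\nu}$ on $L^2(F,\nu)$, rather than the original form on $L^2(F,\mu)$. This is essentially the content of the trace-form identification \cite[Theorem 6.2.1]{FOT} invoked in the construction of the FIN process; once that identification is made explicit, and given that the resistance metric $R$ is intrinsic to the form and independent of the reference measure, the remainder of the proof is the few lines above.
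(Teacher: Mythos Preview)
Your proposal is correct and follows exactly the approach the paper has in mind: the paper simply cites \cite[Proposition 4.1]{Kum} and observes that the argument there is local and transfers to the measure $\nu$, and what you have written is precisely that argument spelled out (oscillation bound from \eqref{eq:hcont}--\eqref{eq:dformhkbd}, then integrate against $\nu$ and compare with total mass one). Your flagged caveat---that \eqref{eq:hcont} and \eqref{eq:dformhkbd} hold for the $L^2(\nu)$-Dirichlet form because the resistance metric is intrinsic to the form and independent of the reference measure---is correctly identified and correctly resolved.
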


The corresponding lower bound has a local version that is not so straightforward. The standard approach
to the on-diagonal lower bound is to estimate the tail of the exit time distribution from balls. In
order to do this estimates on the mean exit time are required. Let $T_A = \inf\{t>0: X_t \notin A\}$.
The arguments in \cite{Kum} yield the following.
\begin{lemma} For $\mathbf{P}$-a.e.\ realisation of $\nu$, it holds that
\begin{eqnarray*}
E_x^{\nu} T_{B_R(\rho,r)} &\leq & rV(\rho,r), \qquad \forall x\in F,\:r\in(0,R_F/2),\\
E_{\rho}^{\nu} T_{B_R(\rho,r)} &\geq & \frac{1}{2}C_R r V\left(\rho,\frac14C_R^2 r\right),\qquad \forall r\in(0,R_F/2).
\end{eqnarray*}
\end{lemma}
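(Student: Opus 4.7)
The plan is to derive both bounds from the Green's function representation of the mean exit time from $B := B_R(\rho, r)$. Write $g^B(x,y)$ for the (symmetric, jointly continuous, $\mathbf{P}$-a.s.\ well-defined) Green's kernel of $X^\nu$ killed on exit from $B$ with respect to the reference measure $\nu$. The general resistance-form theory of \cite{Kig} supplies the Feynman--Kac representation
\[E^\nu_x T_B = \int_B g^B(x,y)\, \nu(dy),\]
the identifications $g^B(x,x) = R(x, B^c)$ and $\mathcal{E}(g^B(x,\cdot), g^B(x,\cdot)) = g^B(x, x)$, and the resistance-form maximum principle $g^B(x,y) \leq g^B(x,x) \wedge g^B(y,y)$.

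For the upper bound, $T_B \equiv 0$ when $x \notin B$, so I may suppose $x \in B$. Combining the maximum-principle bound with the standard resistance-form inequality $R(y, B^c) \leq r$ for $y \in B$ (the uniform version of the ball estimate recalled just before Lemma~\ref{lem:diagub}) yields $g^B(x,y) \leq r$, and integrating over $B$ against $\nu$ gives $E^\nu_x T_B \leq r V(\rho, r)$. For the lower bound, I restrict the integral to a small ball around $\rho$ on which $g^B(\rho, \cdot)$ stays comparable to its diagonal value. Combining the energy identity with the H\"older estimate \eqref{eq:hcont} gives
\[|g^B(\rho, y) - g^B(\rho, \rho)|^2 \leq R(\rho, y)\cdot g^B(\rho, \rho) \leq R(\rho, y)\cdot r,\]
where $g^B(\rho, \rho) = R(\rho, B^c) \leq r$ is used in the last step. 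The right-hand side is at most $(g^B(\rho, \rho)/2)^2$ whenever $R(\rho, y) \leq (g^B(\rho, \rho))^2/(4r)$; invoking the matching lower bound $g^B(\rho, \rho) \geq C_R r$, a sufficient condition is $R(\rho, y) \leq C_R^2 r/4$. For such $y$, $g^B(\rho, y) \geq g^B(\rho, \rho)/2 \geq C_R r/2$, and restricting the integral to $B_R(\rho, C_R^2 r/4)$ produces $E^\nu_\rho T_B \geq (C_R r/2) V(\rho, C_R^2 r/4)$.

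The main obstacle will be justifying the Green's function identities and the sharp constant $R(y, B^c) \leq r$ on resistance balls in the quenched setting, where the reference measure $\nu$ is random rather than deterministic. The former follows from \cite[Chapter 10]{Kig} once one notes that $\mathbf{P}$-a.s.\ $\nu$ is locally finite, Borel regular, and of full support on $(F, R)$ (as recorded in the introduction), while the latter is a structural feature of resistance balls made explicit in \cite[Lemma 4.1]{Kum}. The appearance of $C_R^2$ rather than $C_R$ in the lower bound traces precisely to the need to use both the upper ($\leq r$) and lower ($\geq C_R r$) sides of the resistance-form bound on $g^B(\rho, \rho)$ simultaneously when controlling the H\"older oscillation of $g^B(\rho, \cdot)$.
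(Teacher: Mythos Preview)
Your proof is correct and follows the same approach as the paper: both derive the bounds from the Green's function representation $E^\nu_x T_B = \int_B g^B(x,y)\,\nu(dy)$ together with the resistance-form estimates $C_R r \leq g^B(\rho,\rho) = R(\rho,B^c) \leq r$ from \cite[Lemma~4.1, Proposition~4.2]{Kum}. The paper simply cites \cite[Proposition~4.2]{Kum} for the lower bound $g^B(\rho,y)\geq \tfrac12 C_R r$ on $B_R(\rho,\tfrac14 C_R^2 r)$, whereas you have written out the underlying H\"older-continuity argument explicitly; the content is identical.
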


\begin{proof}
The main part of the argument of \cite[Proposition 4.2]{Kum} can be used as it relies on Green's
function estimates which are independent of the measure. In particular, writing $g_{B_R(\rho,r)}$
for the Green's function for the process killed upon exiting $B_R(\rho,r)$, these estimates can
be summarized as
\[ g_{B_R(\rho,r)}(\rho,\rho) \geq C_R r, \qquad  g_{B_R(\rho,r)}(x,y) \leq r,
\qquad  \forall x,y \in B_R(\rho,r),\:r\in(0,R_F/2). \]
Applying the upper bound here, we deduce that, for all $x\in F$, $r\in(0,R_F/2)$,
\[
E_x^{\nu} T_{B_R(\rho,r)} =\int_{B_R(\rho,r)} g_{B_R(\rho,r)}(x,y) \nu(dy)\leq  r V(\rho,r).\]
For the corresponding lower bound, we can follow the argument in \cite[Proposition 4.2]{Kum} to conclude that $g_{B_R(\rho,r)}(\rho,y) \geq \frac{1}{2}C_R r$ for all $y \in B_R(\rho,\frac{1}{4}C_R^2  r)$,
$r\in(0,R_F/2)$. Hence we have
\[  E_{\rho}^{\nu} T_{B_R(\rho,r)} =\int_{B_R(\rho,r)} g_{B_R(\rho,r)}(x,y) \nu(dy)\geq \frac{1}{2}C_R
r V\left(\rho,\frac{1}{4}C_R^2  r\right), \]
as desired.
\end{proof}

We now use these exit time estimates to get a local heat kernel estimate.

\begin{lemma}\label{lem:diaglb} For $\mathbf{P}$-a.e.\ realisation of $\nu$, it holds that:
for every $r\in(0,R_F/2)$ and $t \leq \frac{1}{4}C_R rV(\rho,\frac{1}{4}C_R^2  r)$,
\[ p_{t}^{\nu}(\rho,\rho) \geq \left(\frac{C_R V(\rho,\frac{1}{4}C_R^2  r)}{4V(\rho,r)}\right)^2 V(\rho,r)^{-1}. \]
\end{lemma}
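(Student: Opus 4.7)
The plan is to use the mean exit time bounds from the preceding lemma as the basis of a standard two-step argument: first convert the lower bound on $E_\rho^\nu T_{B_R(\rho,r)}$ into a lower bound on the survival probability $P_\rho^\nu(T_{B_R(\rho,r)}>s)$, then use a Cauchy--Schwarz argument together with the semigroup property to turn the resulting hitting probability bound into a pointwise lower bound on $p_t^\nu(\rho,\rho)$.

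\textbf{Step 1 (exit time tail).} Writing $T=T_{B_R(\rho,r)}$ and $B=B_R(\rho,r)$, I would apply the strong Markov property at time $s$ in the standard decomposition
\[
E_\rho^\nu T = E_\rho^\nu[T;T\leq s] + E_\rho^\nu[T;T>s] \leq s + P_\rho^\nu(T>s)\,\sup_{x\in F}E_x^\nu T.
\]
Combined with the two bounds from the previous lemma, this yields
\[
P_\rho^\nu(T>s) \;\geq\; \frac{E_\rho^\nu T - s}{\sup_x E_x^\nu T}
\;\geq\; \frac{\tfrac12 C_R r V(\rho,\tfrac14 C_R^2 r) - s}{r\,V(\rho,r)}.
\]
Choosing $s=t/2$ and using the hypothesis $t\leq \tfrac14 C_R r V(\rho,\tfrac14 C_R^2 r)$ to absorb the $-s$ term (so that the numerator is at least $\tfrac14 C_R r V(\rho,\tfrac14 C_R^2 r)$) gives
\[
P_\rho^\nu(T>t/2) \;\geq\; \frac{C_R V(\rho,\tfrac14 C_R^2 r)}{4\,V(\rho,r)}.
\]

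\textbf{Step 2 (hitting probability to heat kernel).} Since $\{T>t/2\}\subseteq\{X_{t/2}\in B\}$, the same lower bound holds for $P_\rho^\nu(X_{t/2}\in B)=\int_B p_{t/2}^\nu(\rho,y)\,\nu(dy)$. Applying Cauchy--Schwarz and then the symmetry/semigroup identity $p_t^\nu(\rho,\rho)=\int p_{t/2}^\nu(\rho,y)^2\,\nu(dy)$ gives
\[
p_t^\nu(\rho,\rho)
\;\geq\; \int_B p_{t/2}^\nu(\rho,y)^2\,\nu(dy)
\;\geq\; \frac{\bigl(P_\rho^\nu(X_{t/2}\in B)\bigr)^2}{\nu(B)}
\;=\; \frac{\bigl(P_\rho^\nu(X_{t/2}\in B)\bigr)^2}{V(\rho,r)}.
\]
Inserting the bound from Step 1 yields exactly the claimed inequality.

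\textbf{Anticipated obstacle.} Both ingredients are essentially standard for resistance forms, so I do not expect a conceptual obstacle; the only subtle point is making sure the time constraint is precisely $t\leq \tfrac14 C_R r V(\rho,\tfrac14 C_R^2 r)$ rather than $\tfrac12(\cdots)$, which forces the choice $s=t/2$ in the Markov step (so the $P_\rho^\nu(T>s)$ bound applies to $X_{t/2}$ before Cauchy--Schwarz doubles the time back to $t$). Otherwise everything is bookkeeping of the constants $C_R$, $1/2$, $1/4$.
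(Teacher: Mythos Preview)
Your proposal is correct and follows essentially the same route as the paper: both arguments combine the Markov-property decomposition $E_\rho^\nu T \leq s + P_\rho^\nu(T>s)\sup_x E_x^\nu T$ with the mean exit time bounds of the preceding lemma to get a tail bound on $T_{B_R(\rho,r)}$, and then convert this to an on-diagonal heat kernel lower bound via the Cauchy--Schwarz inequality $P_\rho^\nu(T>s)^2 \leq p_{2s}^\nu(\rho,\rho)\,V(\rho,r)$. The only cosmetic difference is that the paper applies the tail bound at time $t$ and obtains the estimate for $p_{2t}^\nu$ (then implicitly relabels), whereas you work with $s=t/2$ from the outset; your bookkeeping of the constants is fine.
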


\begin{proof} From the fact that
\[ E_\rho^{\nu} T_{B_R(\rho,r)} \leq t + E_\rho^{\nu}\left( \mathbf{1}_{\{T_{B_R(\rho,r)}>t\}}
E_{X_s}^{\nu}(T_{B_R(\rho,r)})\right), \]
using the estimates on the mean exit time we have
\[\frac{1}{2}C_R rV\left(\rho,\frac{1}{4}C_R^2  r\right) \leq t + rV(\rho,r)P_\rho^{\nu}(T_{B_R(\rho,r)}>t). \]
In particular, this implies
\begin{equation}\label{exittail}
 P_\rho^{\nu}(T_{B_R(\rho,r)}>t) \geq \frac{\frac{1}{2}C_R rV(\rho,\frac{1}{4}C_R^2  r)}{rV(\rho,r)} -
\frac{t}{rV(\rho,r)}\geq \frac{C_R V(\rho,\frac{1}{4}C_R^2  r)}{4V(\rho,r)}
\end{equation}
for $t \leq \frac{1}{4}C_R rV(\rho,\delta r)$. Now, by applying Cauchy-Schwarz  as in
\cite[Proposition 4.3]{Kum}, one obtains the estimate $P_x^{\nu}(T_{B_R(\rho,r)}>t)^2 \leq
p_{2t}^{\nu}(\rho,\rho) V(\rho,r)$, and so we deduce
\[p_{2t}^{\nu}(\rho,\rho) \geq \left(\frac{C_R V(\rho,\frac{1}{4}C_R^2  r)}{4V(\rho,r)}\right)^2 V(\rho,r)^{-1}. \]
\end{proof}

We next apply Lemmas~\ref{lem:diagub} and~\ref{lem:diaglb} in combination with the volume estimates from the previous
section to deduce quenched local heat kernel estimates for $X^\nu$. Our results will be stated in terms of the inverse of the
function
\[ h(r) = rv(r)^{1/\alpha}. \]
We give some straightforward properties of this function and its
inverse arising from the volume doubling property of $v(r)$. The reader may find it helpful to think of
the volume growth for the base measure as given by $v(r) = r^{\delta_f}$, which is the case for one-dimensional
Euclidean space and self-similar fractal sets.

\begin{lemma}
\label{lem:hprops}
\begin{enumerate}
\item[(1)] The function $h(r)$ is increasing and has a doubling property in that $h(2r) \leq \tilde{c} h(r)$,
for $\tilde{c}= 2c_d^{1/\alpha}>2$ where $c_d$ is the constant that appears in the definition of UVD.
\item[(2)] The function has an inverse $h^{-1}(r)$ which is an increasing function for all $r<R_F$
and satisfies the growth condition $2h^{-1}(r) \leq h^{-1}(\tilde{c} r)$.
\item[(3)] Let $q=\log{\tilde{c}}/\log{2}=1+\gamma/\alpha>1$. There is a constant $\hat{c}$ such that
$h(r)\geq \hat{c} r^q$ for $r<r_F$ and hence there is a constant $c'$ such that $h^{-1}(r) \leq c' r^{1/q}$
for all $r<r_F$.
\item[(4)] $r/h^{-1}(r) \geq r^{1-1/q}/c'$ is increasing in $r$. In particular
\begin{eqnarray*}
 c_1  h^{-1}(r) |\log{r}|^{1/q} \leq & h^{-1}(r|\log{r}|) &\leq c_2 h^{-1}(r) |\log{r}| , \;\; r<R_F, \\
c_3 h^{-1}(r) (\log{|\log{r}|})^{1/q} \leq & h^{-1}(r\log|\log{r}| ) &\leq c_4 h^{-1}(r)\log|\log{r}|,
 \;\; r<R_F.
\end{eqnarray*}
\end{enumerate}
\end{lemma}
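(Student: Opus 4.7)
The plan is to derive everything from the volume doubling of $v$ and the polynomial lower bound \eqref{eq:polygrowth}; no deeper input is needed.

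For (1), I would note that $v$ is strictly positive and non-decreasing while the factor $r$ is strictly increasing, so $h(r)=rv(r)^{1/\alpha}$ is strictly increasing, and a one-line calculation $h(2r)\leq 2r(c_d v(r))^{1/\alpha}=2c_d^{1/\alpha}h(r)$ identifies the doubling constant $\tilde c=2c_d^{1/\alpha}$. Part (2) is immediate: strict monotonicity yields a (generalized) inverse $h^{-1}$, and substituting $s=h(r)$ in the doubling inequality and applying $h^{-1}$ gives $2h^{-1}(s)\leq h^{-1}(\tilde c s)$. Part (3) follows by inserting the polynomial lower bound $v(r)\geq cr^{\gamma}$ from \eqref{eq:polygrowth} into the definition of $h$:
\[
h(r)\geq c^{1/\alpha}r^{1+\gamma/\alpha}=\hat c\,r^{q},
\]
where the identity $q=1+\gamma/\alpha=\log\tilde c/\log 2$ is automatic since $c_d=2^{\gamma}$. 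Inverting gives $h^{-1}(r)\leq c'r^{1/q}$.

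For (4), the monotonicity of the lower bound $r^{1-1/q}/c'$ is clear because $q>1$, and $r/h^{-1}(r)\geq r^{1-1/q}/c'$ is just a rewriting of (3). The key additional fact for the logarithmic bounds is that $r/h^{-1}(r)$ is itself non-decreasing; this follows because $h(r)/r=v(r)^{1/\alpha}$ is non-decreasing in $r$, which forces $r/h^{-1}(r)=v(h^{-1}(r))^{1/\alpha}$ to be non-decreasing in $r$. For the upper bound $h^{-1}(r|\log r|)\leq c_2 h^{-1}(r)|\log r|$, I would apply this monotonicity at $u=r|\log r|\geq r$ (valid once $r$ is small enough that $|\log r|\ge 1$) to get $h^{-1}(u)/u\leq h^{-1}(r)/r$ and multiply through by $u$. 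For the matching lower bound $c_1 h^{-1}(r)|\log r|^{1/q}\leq h^{-1}(r|\log r|)$, I would iterate part (2) to obtain $h^{-1}(\tilde c^{n}s)\geq 2^{n}h^{-1}(s)$ and then pick $n$ with $\tilde c^{n}\leq |\log r|<\tilde c^{n+1}$; since $2=\tilde c^{1/q}$, this gives $2^{n}\geq c|\log r|^{1/q}$, as needed. The $\log|\log r|$ statements follow by the same argument with $|\log r|$ replaced by $\log|\log r|$, once $r$ is small enough for this quantity to exceed one.

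I do not anticipate any substantive obstacle: the whole lemma is a routine consequence of doubling. The only minor subtlety is handling a possibly discontinuous $v$ in part (2), which I would deal with by taking the right-continuous generalized inverse of $h$; the inequalities between $h$ and $h^{-1}$ used throughout are preserved either way.
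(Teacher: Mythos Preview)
Your proposal is correct and follows precisely the approach the paper indicates: the paper's proof reads in its entirety ``These are easy consequences of the fact that $v(r)$ is increasing and has the volume doubling property,'' and what you have written is exactly the routine unpacking of that sentence. In particular, your extra observation in part~(4) that $r/h^{-1}(r)=v(h^{-1}(r))^{1/\alpha}$ is non-decreasing (not just the polynomial lower bound) is the right way to get the clean upper estimate $h^{-1}(r|\log r|)\leq h^{-1}(r)|\log r|$, and your iteration of~(2) for the matching lower bound is the standard argument.
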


\begin{proof}
These are easy consequences of the fact that $v(r)$ is increasing and has the volume doubling property.
\end{proof}

\begin{thm}\label{localhk}
(1) There exists a deterministic constant $c$ and a random constant $t_F$ such that
\[ p_t^{\nu}(\rho,\rho) \leq c \frac{h^{-1}(t)}{t} \left(\log{|\log{t}|}\right)^{(1-\alpha)/\alpha},
\qquad \forall t<t_F,
\qquad  \mathbf{P}\mbox{-a.s.} \]
(2) For any $\varepsilon>0$, there exists a deterministic constant $c$ and a random constant $t_F$ such that
\[ p_t^{\nu}(\rho,\rho) \geq c\frac{h^{-1}(t)}{t} |\log{t}|^{-3(1+\varepsilon)/\alpha},\qquad \forall t<t_F,
\qquad  \mathbf{P}\mbox{-a.s.} \]
(3) Also there is a random infinite sequence of times $t_n$ with $t_n\to 0$ such that
\[ p_{t_n}^{\nu}(\rho,\rho) \leq \frac{h^{-1}(t_n)}{t_n} |\log{t_n}|^{-q/\alpha}, \qquad\forall n\in\mathbb{N},
\qquad  \mathbf{P}\mbox{-a.s.} \]
\end{thm}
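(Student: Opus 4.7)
The plan is to combine the pointwise on-diagonal bounds of Lemma~\ref{lem:diagub} (upper) and Lemma~\ref{lem:diaglb} (lower) with the local volume growth estimates of Lemma~\ref{lem:volgrowth}, using the properties of $h$ and $h^{-1}$ from Lemma~\ref{lem:hprops} to pass from the volume parameter $r$ to the time parameter $t$. A preliminary observation I lean on throughout is that, since $v$ is non-decreasing, $h(as)\geq ah(s)$ for every $a\geq 1$, or equivalently $h^{-1}(ay)\leq ah^{-1}(y)$ for $a\geq 1$; this lets me move multiplicative factors past $h^{-1}$ at linear cost.

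For part~(1), I would fix $t$ and take $r=r(t)$ to be the largest value with $2rV(\rho,r)\leq t$. Since $s\mapsto p_s^{\nu}(\rho,\rho)$ is non-increasing (as can be seen from its spectral representation), Lemma~\ref{lem:diagub} gives
\[
p_t^{\nu}(\rho,\rho)\;\leq\;p_{2rV(\rho,r)}^{\nu}(\rho,\rho)\;\leq\;\frac{2}{V(\rho,r)},
\]
and inserting the lower volume bound of Lemma~\ref{lem:volgrowth}(3) produces $p_t^{\nu}(\rho,\rho)\leq Cv(r)^{-1/\alpha}(\log|\log v(r)|)^{(1-\alpha)/\alpha}$. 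Feeding the same volume bound into $2rV(\rho,r)\asymp t$ yields $h(r)\lesssim t(\log|\log v(r)|)^{(1-\alpha)/\alpha}$, and the sublinearity of $h^{-1}$ converts this into $r\lesssim h^{-1}(t)(\log|\log v(r)|)^{(1-\alpha)/\alpha}$. The polynomial growth~\eqref{eq:polygrowth} together with Lemma~\ref{lem:hprops}(3) then bootstraps $|\log v(r)|\leq C|\log t|$ for small $t$, which reduces $\log|\log v(r)|$ to $\log|\log t|$ and yields the claim.

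For part~(2), I would apply Lemma~\ref{lem:diaglb} with $r$ selected at the boundary $t\asymp\tfrac14 C_R rV(\rho,\tfrac14 C_R^{2}r)$. Lemma~\ref{lem:volgrowth}(1) controls $V(\rho,r)$ from above and Lemma~\ref{lem:volgrowth}(3) controls $V(\rho,r')$ with $r':=\tfrac14 C_R^{2}r$ from below, with doubling of $v$ making the two $v^{1/\alpha}$ factors comparable. Inserting these estimates into the expression $V(\rho,r')^{2}V(\rho,r)^{-3}$ from Lemma~\ref{lem:diaglb} produces three copies of $|\log v(r)|^{(1+\varepsilon)/\alpha}$ in the denominator---hence the exponent $-3(1+\varepsilon)/\alpha$---and two copies of the smaller factor $(\log|\log v(r)|)^{1-1/\alpha}$ in the numerator, while saturating the constraint for $r$ reproduces exactly the same $r/h(r)\asymp h^{-1}(t)/t$ scaling as in part~(1). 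The residual log-log powers can be absorbed by slightly enlarging $\varepsilon$.

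For part~(3), let $(r_n)$ be the sequence furnished by Lemma~\ref{lem:volgrowth}(2), so $V(\rho,r_n)\geq cv(r_n)^{1/\alpha}|\log v(r_n)|^{1/\alpha}$, and set $t_n:=2r_nV(\rho,r_n)$. Lemma~\ref{lem:diagub} gives $p_{t_n}^{\nu}(\rho,\rho)\leq 2/V(\rho,r_n)\lesssim v(r_n)^{-1/\alpha}|\log v(r_n)|^{-1/\alpha}$. Inverting the relation $t_n\asymp h(r_n)|\log v(r_n)|^{1/\alpha}$ via the polynomial bound $h(r)\geq\hat c r^{q}$ of Lemma~\ref{lem:hprops}(3), together with the bootstrap $|\log v(r_n)|\asymp|\log t_n|$, then delivers the required negative power of $|\log t_n|$. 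The main technical hurdle is the bookkeeping in part~(2): one must simultaneously saturate the validity condition of Lemma~\ref{lem:diaglb}, propagate three nested logarithmic corrections without losing the $(1+\varepsilon)/\alpha$ exponent, and handle the implicit self-reference of $|\log v(r)|$ on both sides of the equation defining $r$.
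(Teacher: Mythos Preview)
Your proposal is correct and follows essentially the same route as the paper: Lemma~\ref{lem:diagub} plus Lemma~\ref{lem:volgrowth}(3) for part~(1), Lemma~\ref{lem:diaglb} combined with Lemma~\ref{lem:volgrowth}(1) and~(3) to control the ratio $V(\rho,r')^2 V(\rho,r)^{-3}$ for part~(2), and Lemma~\ref{lem:diagub} along the sequence of Lemma~\ref{lem:volgrowth}(2) for part~(3), with Lemma~\ref{lem:hprops} handling the inversion from $r$ to $t$ in each case. Your bookkeeping of the three $|\log v(r)|^{(1+\varepsilon)/\alpha}$ factors in part~(2), and the absorption of the residual $(\log|\log v(r)|)^{2(1-1/\alpha)}$ into $\varepsilon$, matches the paper's computation line for line.
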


\begin{proof} The upper bound of (1) is a simple application of the upper heat kernel estimate
\eqref{eq:hkub} in terms of volume growth, and the lower bound on the volume growth result in
Lemma~\ref{lem:volgrowth}(3), with the properties of the function $h^{-1}$ from Lemma~\ref{lem:hprops}.
That is if $t=rV(\rho,r)$, then $p_t(\rho,\rho)\leq c r/t$. Then for $t \geq c r v(r)^{1/\alpha} |\log|\log v(r)||^{1-1/\alpha} = ch(r)|\log|\log{h(r)/r}||^{1-1/\alpha}$
we require $h(r) \leq t|\log|\log{(t/h^{-1}(t))}||^{(1-\alpha)/\alpha}$. Using the lower bound on
$t/h^{-1}(t)$ and then property (4) of Lemma~\ref{lem:hprops} gives the result.

The bound at (3) is another straightforward consequence of \eqref{eq:hkub}, the lower bound on the volume
in Lemma~\ref{lem:volgrowth}(2) and the properties of $h^{-1}$.

For the lower bound of (2), we use Lemma \ref{lem:diaglb} and
apply Lemma ~\ref{lem:volgrowth} again to deduce
that (with a modification of $\varepsilon$ in the last line)
\begin{eqnarray*}
 p_{2t}^{\nu}(\rho,\rho) &\geq&  \left(\frac{C_R V(\rho,\frac{1}{4}C_R^2  r)}{4V(\rho,r)}\right)^2 V(\rho,r)^{-1} \\
 &\geq & cv(r)^{-1/\alpha}|\log{v(r)}|^{-3(1+\varepsilon)/\alpha}(\log|\log{v(r)}|)^{2(1-1/\alpha)}\\
&\geq & c  v(r)^{-1/\alpha}|\log{v(r)}|^{-3(1+\varepsilon)/\alpha},
\end{eqnarray*}
provided $r$ is such that $t \leq \frac{1}{4}C_R  rV(\rho,r)$. This will hold if we take $r$ such that
$$t\leq c' rv(r)^{1/\alpha}
(\log|\log{v(r)}|)^{1-1/\alpha}=c'h(r)(\log|\log{(h(r)/r)}|)^{1-1/\alpha}.$$
Inverting this gives $ r \geq c'' h^{-1}(t) (\log|\log{t}|)^{(1-\alpha)/\alpha}$. Substituting this into the above bound leads to
\[ p_t^{\nu}(\rho,\rho) \geq   \frac{cr}{h(r)}\left|\log{\frac{h(r)}{r}}\right|^{-3(1+\varepsilon)/\alpha}. \]
Substituting in for $r$, using the properties of $h^{-1}$ and adjusting $\varepsilon$ gives the result.
\end{proof}

We can prove a sharper version for the upper fluctuations. In order to do this we consider a slight modification of our function
$h$ and define $h_{ll}(r) = rv(r)^{1/\alpha}(\log\log{v(r)})^{1-1/\alpha}$.

\begin{thm}\label{hll}
We have
\[ 0< \limsup_{t\to 0} \frac{p^{\nu}_t(\rho,\rho)t}{h_{ll}^{-1}(t)}  <\infty, \;\; \mathbf{P}-a.s. \]
\end{thm}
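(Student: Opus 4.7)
The plan is to sharpen both directions of the on-diagonal bound from Theorem \ref{localhk}(1),(3) by pairing Lemmas \ref{lem:diagub} and \ref{lem:diaglb} with the \emph{matched} volume bounds in Lemma \ref{lem:volgrowth}(3)--(4) rather than the mismatched pair used for Theorem \ref{localhk}. Write $\phi(r):=v(r)^{1/\alpha}(\log|\log v(r)|)^{1-1/\alpha}$, so that $h_{ll}(r)=r\phi(r)$. As a preliminary I would record the doubling-type properties $v(cr)\asymp v(r)$, $\phi(cr)\asymp\phi(r)$, $h_{ll}(cr)\asymp h_{ll}(r)$ and hence $h_{ll}^{-1}(ct)\asymp h_{ll}^{-1}(t)$ for any fixed $c>0$ and $r,t$ small; these follow from UVD exactly as in Lemma \ref{lem:hprops}.

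For finiteness of the $\limsup$, for each small $t$ I would take $r=r(t)$ essentially maximal with $2rV(\rho,r)\leq t$, so that Lemma \ref{lem:diagub} yields $p_t^\nu(\rho,\rho)\leq 2/V(\rho,r)\leq 4r/t$. The universal lower bound $V(\rho,r)\geq c\phi(r)$ from Lemma \ref{lem:volgrowth}(3) then forces $t\geq 2cr\phi(r)=2c\,h_{ll}(r)$, whence $r\leq h_{ll}^{-1}(t/(2c))\leq C h_{ll}^{-1}(t)$ by the observed doubling. This gives $p_t^\nu(\rho,\rho)\,t\leq 4C\,h_{ll}^{-1}(t)$ for all sufficiently small $t$, $\mathbf{P}$-a.s.

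For positivity of the $\limsup$, I would exploit the exceptional subsequence $(r_n)$ from Lemma \ref{lem:volgrowth}(4), on which $V(\rho,r_n)\leq c\phi(r_n)$. Applying Lemma \ref{lem:volgrowth}(3) at the scale $\frac{1}{4}C_R^2 r_n$ and using $\phi(\frac{1}{4}C_R^2 r_n)\asymp\phi(r_n)$ gives $V(\rho,\frac{1}{4}C_R^2 r_n)\geq c'\phi(r_n)\geq c''V(\rho,r_n)$, so the volume ratio in Lemma \ref{lem:diaglb} is bounded away from $0$ along $(r_n)$. Set $t_n:=\frac{1}{4}C_R\, r_n V(\rho,\frac{1}{4}C_R^2 r_n)$; the same comparability yields $t_n\asymp r_n\phi(r_n)=h_{ll}(r_n)$, and therefore $h_{ll}^{-1}(t_n)\asymp r_n$. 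Substituting into Lemma \ref{lem:diaglb} at $t=t_n$ gives $p_{t_n}^\nu(\rho,\rho)\geq c'''/V(\rho,r_n)\geq c''''/\phi(r_n)$, and multiplying by $t_n$ and dividing by $h_{ll}^{-1}(t_n)$ yields $p_{t_n}^\nu(\rho,\rho)\,t_n/h_{ll}^{-1}(t_n)\geq c'''''>0$.

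The delicate step is the control of the squared volume ratio $(V(\rho,\frac{1}{4}C_R^2 r)/V(\rho,r))^2$ appearing in Lemma \ref{lem:diaglb}: at a generic scale this could in principle be arbitrarily small, and it is precisely the alignment of the tight upper bound from Lemma \ref{lem:volgrowth}(4) at $r_n$ with the universal lower bound from Lemma \ref{lem:volgrowth}(3) at the slightly smaller scale $\frac{1}{4}C_R^2 r_n$ that pins this ratio away from $0$. This is also the reason only a $\limsup$ is obtained rather than a full limit, since the sharp upper bound on $V(\rho,\cdot)$ is available only along a subsequence.
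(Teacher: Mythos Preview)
Your proposal is correct and follows essentially the same approach as the paper: the upper bound combines Lemma~\ref{lem:diagub} with the universal volume lower bound of Lemma~\ref{lem:volgrowth}(3), and the lower bound combines Lemma~\ref{lem:diaglb} with the matched pair Lemma~\ref{lem:volgrowth}(3)--(4) along the subsequence $(r_n)$, exactly as the paper does. One small slip: with $r$ chosen so that $2rV(\rho,r)\leq t$, the inequality actually reads $2/V(\rho,r)\geq 4r/t$, not $\leq$; what you really want (and what the paper's informal ``if $t=rV(\rho,r)$'' amounts to) is to take $r$ with $2rV(\rho,r)=t$, so that $p_t\leq 2/V(\rho,r)=4r/t$ and then your bound $r\leq Ch_{ll}^{-1}(t)$ finishes the job.
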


\begin{proof}
The upper bound is essentially derived in the proof of Lemma~\ref{localhk}.

For the lower bound we use Lemma~\ref{lem:diaglb}. By Lemma~\ref{lem:volgrowth}(4) we have a sequence
$\{r_n\}_{n=1}^{\infty}$ such that $V(\rho,r_n) \leq c v(r_n)^{1/\alpha}(\log|\log{v(r_n)}|)^{1-1/\alpha}$. We also know that
\[ V(\rho,\frac{1}{4}C_R^2  r_n) \geq  c' v(\frac{1}{4}C_R^2  r_n)^{1/\alpha}(\log|\log{v(\frac{1}{4}C_R^2  r_n)}|)^{1-1/\alpha}
 \geq c''  v(r_n)^{1/\alpha}(\log|\log{v(r_n)}|)^{1-1/\alpha}.\]
 Thus, almost surely, there is a sequence of times $\{t_n\}_{n=1}^{\infty}$ such that for
\[ t_n \leq \frac12 c'' r_n v(r_n)^{1/\alpha}(\log|\log{v(r_n)}|)^{1-1/\alpha}=c'' h_{ll}(r_n), \]
we have $p_{t_n}(\rho,\rho) \geq c''' v(r_n)^{-1/\alpha}(\log|\log{v(r_n)}|)^{1/\alpha-1}$. That is $p_{t_n}(\rho,\rho) \geq c''' h_{ll}^{-1}(t_n)/t_n$, which gives us the result.
\end{proof}

Finally, note that in the case where $v(r)=r^{\delta_f}$, we obtain (\ref{qodpoly1}) from Theorem~\ref{hll}, and (\ref{qodpoly2})
and (\ref{qodpoly3}) from Theorem~\ref{localhk}.

\subsection{Global quenched heat kernel estimates}\label{globalsec}

We can use the same ideas as in the previous section to obtain bounds on the on-diagonal heat kernel that are uniform on compacts.
Throughout, we let $G\subseteq F$ be a compact subset with $\mu(G)>0$. We begin with the behaviour of the infimum. The atoms of
$\nu$ result in points where the heat kernel does not diverge as $t\to 0$.

{\thm There exist random constants $c_1,c_2$ and a deterministic constant $t_F$ such that
\[ 0<c_1 \leq \inf_{x\in G} p_t^{\nu}(x,x) \leq c_2, \qquad  \forall t<t_F, \qquad  \mathbf{P}\mbox{-a.s.} \]}

\begin{proof}
By \cite{Kig} the proof of Theorem~10.4 and Lemma~10.8, we have that $p_t^{\nu}(x,x)$ is a strictly positive decreasing function of $t$ for each $x$, and so $\inf_{x\in G} p_t^{\nu}(x,x) \geq \inf_{x\in G} p_{t_F}^{\nu}(x,x)$ for $t<t_F$.
Applying the continuity of the heat kernel (see \cite{Kig}), the latter is strictly positive. Thus we have the lower bound.

For the upper bound we observe that if we take a point $(v_i,x_i)$ with $x_i\in G$ in the Poisson process, then $\inf_{x \in G} p_t^{\nu}(x,x) \leq p_t^{\nu}(x_i,x_i)$ for all $t>0$. From the local upper bound \eqref{eq:hkub} we have, as $\nu(B_R(x_i,r)) \geq c$, that $p_t^{\nu}(x_i,x_i) \leq c$, and the result follows.
\end{proof}

For the supremum of the heat kernel, we have the following estimates.

\begin{thm} \label{thm:globalub}
For any $\varepsilon>0$, there exist deterministic constants $c_1,c_2$ and a random constant $t_F$ such that
\[c_1\frac{h^{-1}(t)}{t} |\log{t}|^{-3(1+\varepsilon)/\alpha}\leq \sup_{x\in G} p_t^{\nu}(x,x) \leq c_2 \frac{h^{-1}(t)}{t}
|\log{t}|^{(1-\alpha)/\alpha},\]
for every $t<t_F$, $\mathbf{P}$-a.s.
\end{thm}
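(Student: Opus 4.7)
The plan is to prove the two bounds separately, each adapting the arguments of Theorem \ref{localhk} but supplying uniform inputs where needed.

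For the upper bound, I would apply \eqref{eq:hkub} at each $x\in G$ by defining $r_x\in(0,R_F)$ through $2r_xV(x,r_x)=t$ (using monotonicity of $r\mapsto rV(x,r)$), which yields $p_t^\nu(x,x)\leq 4r_x/t$. The uniform lower volume estimate of Lemma \ref{volasymp} provides a random $r_*>0$ with $V(x,r)\geq c\,v(r)^{1/\alpha}|\log{v(r)}|^{1-1/\alpha}$ for every $x\in G$ and $r<r_*$, $\mathbf{P}$-a.s. Substituting this into the defining equation for $r_x$ gives $h(r_x)\leq c^{-1}\,t\,|\log{v(r_x)}|^{1/\alpha-1}$, reducing the problem to inverting $h$ carefully.

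The inversion is the one delicate point. From the polynomial lower bound \eqref{eq:polygrowth} on $v$ together with $h(r)\geq \hat c r^q$ from Lemma \ref{lem:hprops}(3), a short computation shows $|\log{r_x}|\asymp|\log{t}|$ and hence $|\log{v(r_x)}|\leq C|\log{t}|$ for small $t$. Lemma \ref{lem:hprops}(4) tells us that $s/h^{-1}(s)$ is increasing, equivalently $h^{-1}(sL)\leq L\,h^{-1}(s)$ for $L\geq 1$; applied with $L=C|\log{t}|^{1/\alpha-1}$ this gives $r_x\leq c\,h^{-1}(t)\,|\log{t}|^{(1-\alpha)/\alpha}$, which combined with $p_t^\nu(x,x)\leq 4r_x/t$ is the desired uniform upper bound.

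For the lower bound, I would choose any point $x_0\in G$ carrying no atom of $\nu$; such a point exists $\mathbf{P}$-a.s.\ because $\mu(G)>0$ and $\nu$ has only countably many atoms. The argument of Theorem \ref{localhk}(2) applies verbatim at $x_0$ in place of $\rho$: the coupling of Lemma \ref{lem:nuL} is centre-independent, the liminf test for $\mathcal{L}$ used in Lemma \ref{lem:volgrowth}(3) holds at any fixed centre, and the Green function estimates underlying Lemma \ref{lem:diaglb} are local around $x_0$. This produces $p_t^\nu(x_0,x_0)\geq c\,h^{-1}(t)t^{-1}|\log{t}|^{-3(1+\varepsilon)/\alpha}$ for all $t<t_F$, and the result follows from $\sup_{x\in G}p_t^\nu(x,x)\geq p_t^\nu(x_0,x_0)$.

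The principal obstacle is the bookkeeping in the upper bound, specifically tracking the $|\log{v(r_x)}|$ correction through the inversion of $h$ so that it combines cleanly with the polynomial bounds to give exactly the exponent $(1-\alpha)/\alpha$; the lower bound is essentially a free corollary of the pointwise theory already developed in Section \ref{localsec}.
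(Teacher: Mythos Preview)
Your proposal is correct and follows essentially the same route as the paper: the upper bound combines \eqref{eq:hkub} with the uniform lower volume bound of Lemma~\ref{volasymp} and inverts via Lemma~\ref{lem:hprops}, exactly as in the paper, and the lower bound is obtained by invoking the pointwise estimate of Theorem~\ref{localhk}(2) at a single point of $G$. The only cosmetic difference is that you are explicit about selecting a non-atom point $x_0\in G$, whereas the paper simply cites Theorem~\ref{localhk}(2); note that it is cleanest to fix $x_0\in G$ deterministically and then observe that $\mathbf{P}$-a.s.\ $\nu$ has no atom there (since $\mu$ is non-atomic), rather than choosing $x_0$ after seeing $\nu$.
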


\begin{proof}
The upper bound follows from \eqref{eq:hkub} with the lower bound on the infimum of the volumes of balls.
That is if we set $t=rV(x,r)$ we have $p_t(x,x) \leq cr/t$. By choosing
\begin{equation}
 t\geq r\inf_{x\in G} V(x,r) \geq ch(r)\left|\log{\left(\frac{h(r)}{r}\right)}\right|^{1-1/\alpha}, \label{eq:globt}
\end{equation}
we have, using Lemma~\ref{lem:hprops}(4), that  $r\leq c' h^{-1}(t)|\log{t}|^{(1-\alpha)/\alpha}$.
Substituting this in for $r$ in the upper bound on $p_t(x,x)$ gives the uniform upper bound.

The lower bound is a simple consequence of Theorem \ref{localhk}(2).
\end{proof}

We can also give a fluctuation result for the supremum of the heat kernel. For this we consider the function
$h_l(r)=rv(r)^{1/\alpha}|\log{v(r)}|^{1-1/\alpha}$.

\begin{thm}
We have
\[ 0<  \limsup_{t\to 0} \frac{\sup_{x\in G} p^{\nu}_t(x,x) t}{h_l^{-1}(t)} < \infty, \;\; \mathbf{P}\mbox{-a.s.} \]
\end{thm}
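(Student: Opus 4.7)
The plan is to sharpen Theorem \ref{thm:globalub} by feeding the matching two-sided asymptotics for $\inf_{x\in G}V(x,r)$ supplied by Lemma \ref{volasymp} into the same machinery used there, rather than the cruder universal volume estimate of Lemma \ref{lem:volgrowth}(3). Write $\phi(r):=v(r)^{1/\alpha}|\log v(r)|^{1-1/\alpha}$, so that $h_l(r)=r\phi(r)$. By Lemma \ref{volasymp}, $\mathbf{P}$-a.s.\ there are random constants $c_1,c_2>0$, a random $r_0>0$, and a random sequence $r_n\downarrow 0$ such that $\inf_{x\in G}V(x,r)\ge c_1\phi(r)$ for all $r<r_0$ and $\inf_{x\in G}V(x,r_n)\le c_2\phi(r_n)$.

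For the finite $\limsup$, I would repeat the upper-bound argument of Theorem \ref{thm:globalub}, but inserting the uniform bound $V(x,r)\ge c_1\phi(r)$. For each $x\in G$ and small $t$, choose $r_x$ maximal with $2r_xV(x,r_x)\le t$; by \eqref{eq:hkub} and monotonicity of $t\mapsto p_t(x,x)$ we get $p_t(x,x)\le 2/V(x,r_x)\le 4r_x/t$. The uniform volume lower bound forces $2c_1h_l(r_x)\le t$, hence $r_x\le h_l^{-1}(t/(2c_1))$. A doubling property for $h_l^{-1}$, derived exactly as in Lemma \ref{lem:hprops}(2) from the doubling of $v$, then yields $\sup_{x\in G}p_t(x,x)\le C\,h_l^{-1}(t)/t$ for all sufficiently small $t$, which gives the upper bound.

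For the positive $\limsup$, I would invoke Lemma \ref{lem:diaglb} along the subsequence $r_n$ at carefully chosen points; although that lemma is stated at $\rho$, its proof is entirely local, being based on Green's function estimates valid at any point, and carries over verbatim to arbitrary $x\in F$. For each large $n$, pick $x_n\in G$ with $V(x_n,r_n)\le 2c_2\phi(r_n)$. Since $\phi(\tfrac14 C_R^2 r_n)\asymp\phi(r_n)$ (using volume doubling of $v$ and the fact that the logarithm is slowly varying), the uniform lower bound also gives $V(x_n,\tfrac14 C_R^2 r_n)\ge c_3\phi(r_n)$ for large $n$. Setting $t_n:=\tfrac14 C_R r_n V(x_n,\tfrac14 C_R^2 r_n)\asymp h_l(r_n)$, Lemma \ref{lem:diaglb} then gives
\[
p_{t_n}(x_n,x_n)\ge\Bigl(\frac{C_R V(x_n,\tfrac14 C_R^2 r_n)}{4V(x_n,r_n)}\Bigr)^2 V(x_n,r_n)^{-1}\ge\frac{c_4}{\phi(r_n)}\asymp\frac{h_l^{-1}(t_n)}{t_n},
\]
so $\sup_{x\in G}p_{t_n}(x,x)\cdot t_n/h_l^{-1}(t_n)\ge c>0$ along $t_n\to 0$, producing the positive $\limsup$.

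The main obstacle is coordinating the two volume estimates at the pair of scales $r_n$ and $\tfrac14 C_R^2 r_n$ for the \emph{same} point $x_n$: one needs an upper bound on $V(x_n,r_n)$ together with a matching lower bound on $V(x_n,\tfrac14 C_R^2 r_n)$ of the same order. This is exactly what the combined $\liminf$ and $\limsup$ statements of Lemma \ref{volasymp} deliver (applied uniformly in $x\in G$), once one observes that $\phi$ is comparable at comparable scales via the doubling of $v$.
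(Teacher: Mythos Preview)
Your proposal is correct and follows essentially the same approach as the paper: for the finite $\limsup$, the paper also modifies the upper-bound argument of Theorem \ref{thm:globalub} by inserting the uniform lower volume bound of Lemma \ref{volasymp} so that $h_l$ replaces $h$; for the positive $\limsup$, the paper likewise selects near-minimisers $x_n$ from Lemma \ref{volasymp}, combines the upper bound on $V(x_n,r_n)$ with the uniform lower bound on $V(x_n,\tfrac14 C_R^2 r_n)$ (using doubling of $v$ to compare scales), and applies Lemma \ref{lem:diaglb} at $x_n$ with $t_n\asymp h_l(r_n)$. One minor slip: the intermediate inequality $2/V(x,r_x)\le 4r_x/t$ goes the wrong way under your hypothesis $2r_xV(x,r_x)\le t$; the paper sidesteps this by phrasing the relation as $t=rV(x,r)$ (so that $p_t(x,x)\le cr/t$ holds as an equality up to constants) and then bounding $r$ from above via $t\ge c h_l(r)$.
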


\begin{proof}
For the upper bound we just make a minor modification of the proof of the upper bound in Theorem~\ref{thm:globalub}, using our
function $h_l(t)$ in \eqref{eq:globt}.

For the lower bound, by Lemma~\ref{volasymp}, there is a sequence of points and radii $\{x_n,r_n\}_{n=1}^{\infty}$
with $x_n\in F$ and $r_n\to 0$, such that $V(x_n,r_n) \leq c v(r_n)^{1/\alpha}|\log{v(r_n)}|^{1-1/\alpha}$. We also obtain by applying the lower bound in Lemma~\ref{volasymp} and then UVD,
\[ V(x_n,\frac14C_R^2 r_n) \geq c_1 v(\frac14C_R^2 r_n)^{1/\alpha}|\log{v(\frac14C_R^2 r_n)}|^{1-1/\alpha} \geq c_2 v(r_n)^{1/\alpha}|\log{v(r_n)}|^{1-1/\alpha}. \]
Thus, if we take $t_n = h_l(r_n)$ we will have a sequence of points and times, with $t_n\to 0$ such that as
$t_n = h_l(r_n) \leq \frac14C_R r_nV(x_n,r_n)$, by Lemma~\ref{lem:diaglb}, we have
\[ p_{t_n}(x_n,x_n) \geq \left( \frac{C_R V(x_n,\delta r_n)}{4V(x_n,r_n)}\right)^2 V(x_n,r_n)^{-1}\geq   \frac{cr_n}{h_l(r_n)} =  \frac{ch_l^{-1}(t_n)}{t_n}.\]
This gives the lower bound on the upper fluctuation of $\sup_{x\in G}p_t(x,x)$.
\end{proof}

Again we can specialize these results to the case where $v(r)=r^{\delta_f}$ to obtain (\ref{qodpoly0}) and (\ref{qodpoly05}).

\section{Annealed heat kernel and exit time estimates}\label{annsec}

In this section, we prove Theorem \ref{annthm}. Throughout, we suppose that UVD and MC hold, with the latter condition giving us the existence of a metric $d$ for which $d\asymp R^\beta$ for some $\beta>0$. Moreover, for the entirety of this section, we also suppose that $X$ is a diffusion. Note that this implies that, for $\mathbf{P}$-a.e.\ realisation of $\nu$, $X^\nu$ is also a diffusion. In the proofs of both the upper and lower annealed heat kernel bounds, we will apply chaining arguments, the success of which depends on exploiting the independence of $\nu$ between disjoint regions of space.

The annealed heat kernel result is also closely linked to the following exit time bound. To state this, we use the abbreviation
\[T_D:=T_{B_d(\rho,D)}\]
for the exit time of the ball $B_d(\rho,D)$ by $X^\nu$. We will also write
\[\mathbb{P}_x(\cdot):=\int P^\nu_x(\cdot)\mathbf{P}(d\nu)\]
for the annealed law of $X^\nu$ started from $x\in F$, and $D_F:=\sup_{x,y\in F}d(x,y)$ for the diameter of $F$ with respect to $d$.

{\thm\label{annexit} (a) Suppose UVD and MC hold, and that $X$ is a diffusion. Then there exist constants $a,c_1$ such that
\[\mathbb{P}_\rho\left(T_D \leq T\right)\leq  e^{-c_1N(a)},\qquad \forall D\in(0,D_F/2),\:T\in (0,h(R_F)),\]
where $N(a)$ is defined as at (\ref{nadef}) with $T$ in place of $t$ and $D$ in place of $d(x,y)$.\\
(b) Suppose UVD and GMC hold, and that $X$ is a diffusion. Then there exist constants $a,c_1,c_2$ such that
\[\mathbb{P}_\rho\left(T_D \leq T\right)\geq  c_1 e^{-c_2N(a)},\qquad \forall D\in(0,D_F/2),\:T\in (0,h(R_F)),\]
where $N(a)$ is again defined as at (\ref{nadef}) with $T$ in place of $t$ and $D$ in place of $d(x,y)$.}

\subsection{Proof of upper bounds}

We start with the proof of the upper annealed exit time bound.

\begin{proof}[Proof of Theorem \ref{annexit}(a)] Let $N=N(a)$. Clearly we can assume $N\geq1$, else the result is trivial. We also note that it is an elementary exercise to check from condition MC and the definition at (\ref{nadef}) that $N$ is finite. Set $\Delta=D/N$, and define a sequence of stopping times $(\sigma_i)_{i\geq 0}$ by setting $\sigma_0=0$, and
\[\sigma_{i+1}=\inf\left\{t>\sigma_i:\: \sup_{s\leq \sigma_i}d\left(X_s^\nu,X_t^\nu\right)\geq \Delta\right\}.\]
(Note this sequence might terminate if the space has finite diameter.) Moreover, write
\[\tilde{\sigma}_{i}=\inf\left\{t>\sigma_i:\: d\left(X_{\sigma_i}^\nu,X_t^\nu\right)\geq \Delta/2\right\},\]
so that $\sigma_0\leq \tilde{\sigma}_0\leq \sigma_1\leq \tilde{\sigma}_1\dots$. Since $X^\nu_{[0,\sigma_i)}\subseteq {B}_d(0,i\Delta)$, we have $\sigma_N\leq T_D$, and so
\begin{eqnarray*}
\mathbb{P}_\rho\left(T_D \leq T\right)&\leq &\mathbb{P}_\rho\left(\sigma_N \leq T\right)\\
&\leq&\mathbb{P}_\rho\left(\sum_{i=0}^{N-1}\left(\tilde{\sigma}_i-\sigma_i\right) \leq T\right)\\
&\leq &e^{\theta T}\mathbb{E}_\rho\left(e^{-\theta\sum_{i=0}^{N-1}\left(\tilde{\sigma}_i-\sigma_i\right)}\right)
\end{eqnarray*}
for any $\theta>0$. Now, note that we can write
\[\tilde{\sigma}_i-\sigma_i=\int_{B_d(X_{\sigma^X_i},\Delta/2)}\left(L_{\tilde{\sigma}^X_i}(x)-L_{{\sigma}^X_i}(x)\right)\nu(dx),\qquad i=0,\dots,N-1,\]
where $\tilde{\sigma}^X_i$ and ${\sigma}^X_i$ are defined similarly to $\tilde{\sigma}_i$ and ${\sigma}_i$, but with the Brownian motion $X$ in place of the FIN diffusion $X^\nu$. Since the balls
$B_d(X_{\sigma^X_i},\Delta/2)$, $i=0,\dots,N-1$, are disjoint, we thus have that, conditional on the Brownian motion $X$, the random variables $(\tilde{\sigma}_i-\sigma_i)_{i=0}^{N-1}$ are independent. In particular, this yields
\[\mathbb{P}_\rho\left(T_D \leq T\right)\leq e^{\theta T}\mathbb{E}_\rho\left(\prod_{i=0}^{N-1}
\mathbb{E}_\rho\left(e^{-\theta\left(\tilde{\sigma}_i-\sigma_i\right)}\:\vline\: X\right)\right).\]
Applying the strong Markov property for $X$ at time $\sigma^X_{N-1}$, we consequently find that
\begin{equation}\label{aftermarkov}
\mathbb{P}_\rho\left(T_D \leq T\right)\leq e^{\theta T}\mathbb{E}_\rho\left(
\prod_{i=0}^{N-2}
\mathbb{E}_\rho\left(e^{-\theta\left(\tilde{\sigma}_i-\sigma_i\right)}\:\vline\: X\right)\times \mathbb{E}_{X_{\sigma^X_{N-1}}}\left(e^{-\theta\tilde{\sigma}_0}\right)
\right).
\end{equation}

In the next part of the proof, we derive the following bound:
\begin{equation}\label{target}
\mathbb{E}_{x}\left(e^{-\theta\tilde{\sigma}_0}\right)\leq 1-c_1+\frac{c_2}{\theta h(r)}.
\end{equation}
First, recall from (\ref{exittail}) that
\[P_x^\nu\left(\tilde{\sigma}_0\leq t\right)\leq 1-\frac{c_1V(x,\frac14C_R^2 r)}{V(x,r)}+\frac{t}{rV(x,r)},\]
where we write $r=\Delta^{1/\beta}$. Integrating this (cf.\ \cite[Lemma 1.1]{BarBas}), we find
\begin{equation}\label{moment0}
E_x^\nu\left(e^{-\theta\tilde{\sigma}_0}\right)\leq 1-\frac{c_1V(x, \frac14C_R^2 r)}{V(x,r)}+\frac{1}{\theta rV(x,r)}.
\end{equation}
We next apply the coupling with a stable process of Lemma \ref{lem:nuL} to deduce that
\begin{equation}\label{moment1}
\mathbf{E}\left(\frac{V(x,\frac14C_R^2 r)}{V(x,r)}\right)\geq \mathbf{E}\left(\frac{\mathcal{L}_{c_lv( \frac14C_R^2 r)}}{\mathcal{L}_{c_uv(r)}}\right)=
\mathbf{E}\left(\frac{\mathcal{L}_{c_lv(\frac14C_R^2r)/c_u v(r)}}{\mathcal{L}_{1}}\right)\geq
\mathbf{E}\left(\frac{\mathcal{L}_{c_l/c_uc_d^{n}}}{\mathcal{L}_{1}}\right)=c_2>0,
\end{equation}
where for the first equality we apply the self-similarity under scaling of the process $\mathcal{L}$, and for the second inequality we repeatedly
apply the doubling property of $v$ and set $n=\lceil -\log{\frac14C_R^2}/\log{2}\rceil$. Furthermore, again applying Lemma \ref{lem:nuL} and the self-similarity of $\mathcal{L}$, we have that
\begin{equation}\label{moment2}
\mathbf{E}\left(\frac{1}{ rV(x,r)}\right)\leq \frac{c_3}{h(r)}\mathbf{E}\left(\frac{1}{\mathcal{L}_1}\right).
\end{equation}
From (\ref{eq:subtail}), we have that the expectation on the right-hand side here is finite. Substituting (\ref{moment1}) and (\ref{moment2}) into the $\mathbf{P}$-integrated version of (\ref{moment0}), and relabelling the constants, we obtain the bound at (\ref{target}).

Returning to (\ref{aftermarkov}), we apply the bound of the previous paragraph to deduce that
\[\mathbb{P}_\rho\left(T_D \leq T\right)\leq e^{\theta T}\mathbb{E}_\rho\left(
\prod_{i=0}^{N-2}
\mathbb{E}_\rho\left(e^{-\theta\left(\tilde{\sigma}_i-\sigma_i\right)}\:\vline\: X\right)\right)
\times \left(1-c_1+\frac{c_2}{\theta h(r)}\right).\]
Iterating the argument, this gives
\[\mathbb{P}_\rho\left(T_D \leq T\right)\leq e^{\theta T}\left(1-c_1+\frac{c_2}{\theta h(r)}\right)^{N}\leq e^{\theta T-c_1N+\frac{c_2N}{\theta h(r)}}.\]
Optimising over $\theta$ yields
\[\mathbb{P}_\rho\left(T_D \leq T\right)\leq  e^{-c_1N+2\sqrt{\frac{c_2TN}{h(r)}}}\leq e^{-c_1N+N\sqrt{\frac{4c_2}{{a}}}},\]
where the second inequality follows from our choice of $N$. Thus taking $a$ large gives the desired conclusion.
\end{proof}

To establish the on-diagonal upper bound of Theorem \ref{annthm}(a), we use the techniques developed in \cite{BarKum, Cro2}.

\begin{proof}[Proof of Theorem \ref{annthm}(a)] For the first part of the proof, we suppose that $at\leq h(d(x,y)^{1/\beta})$, where $a$ is the constant of Theorem \ref{annexit}(a), so that $N(a)\geq 1$. We decompose the heat kernel as follows:
\begin{equation}
p_t^\nu(x,y)\leq\int_{H_{y,x}}p_{t/2}^\nu(x,z)p_{t/2}^\nu(z,y)\nu(dz)+\int_{H_{x,y}}p_{t/2}^\nu(x,z)p_{t/2}^\nu(z,y)\nu(dz)=:I_1+I_2,\label{eq:hkubsplit}
\end{equation}
where $H_{x,y}:=\{z\in F:\:R(x,z)\leq R(y,z)\}$. Hence note that, by symmetry, to complete the proof it will suffice to show that
\[\mathbf{E}I_1\leq\frac{c_1h^{-1}(t)}{t}e^{-c_2N(a)}.\]
To this end, writing $B=B_R(x,\frac14 R(x,y))$, $H=H_{y,x}^c$, observe that
\begin{eqnarray*}
I_1& = &\int_{[0,t/2]\times \partial B}P_x^\nu\left(T_{B}\in ds,\:X^\nu_{T_{B}}\in dw\right) \int_{[0,t/2-s]\times \partial H}P_w^\nu\left(T_{H}\in ds',\:X^\nu_{T_{H}}\in dw'\right)\\
&&\qquad\qquad\qquad\qquad\qquad\qquad\qquad\qquad\qquad\times\int_{H_{x,y}^c}p_{t/2-s-s'}^\nu(w',z)p_{t/2}^\nu(z,y)\nu(dz)\\
&\leq&\int_{[0,t/2]\times \partial B}\hspace{-5pt}P_x^\nu\left(T_{B}\in ds,\:X^\nu_{T_{B}}\in dw\right) \int_{[0,t/2-s]\times \partial H}\hspace{-5pt}P_w^\nu\left(T_{H}\in ds',\:X^\nu_{T_{H}}\in dw'\right)p_{t-s-s'}^\nu(w',y)\\
&\leq&\int_{[0,t/2]\times \partial B}\hspace{-5pt}P_x^\nu\left(T_{B}\in ds,\:X^\nu_{T_{B}}\in dw\right) \int_{[0,t/2-s]\times \partial H}\hspace{-5pt}P_w^\nu\left(T_{H}\in ds',\:X^\nu_{T_{H}}\in dw'\right)\\
&&\qquad\qquad\qquad\qquad\qquad\qquad\qquad\qquad\qquad\qquad\qquad\times\sqrt{p_{t/2}^\nu(w',w')p_{t/2}^\nu(y,y)}\\
&\leq&\int_{\partial B}\hspace{-5pt}P_x^\nu\left(T_{B}\leq t/2,\:X^\nu_{T_{B}}\in dw\right) \int_{\partial H}\hspace{-5pt}P_w\left(X_{T_{H}}\in dw'\right)\sqrt{p_{t/2}^\nu(w',w')p_{t/2}^\nu(y,y)},\\
\end{eqnarray*}
where for the second inequality we apply Cauchy-Schwarz to obtain that $p_{t-s-s'}^\nu(w',y)\leq p_{t-s-s'}^\nu(w',w')^{1/2}p_{t-s-s'}^\nu(y,y)^{1/2}$, and apply the fact that the on-diagonal part of the heat kernel is monotonically decreasing as a function of $t$.
Moreover, for the third inequality, we use that the hitting distribution of $X^\nu$ is the same as for the Brownian motion $X$. Now, for a fixed $t$, define $\Lambda_{w',y}$ to be equal to
\[\inf\left\{\lambda>\lambda_0:\frac{v(h^{-1}(t/4\lambda))^{1/\alpha}}{(\log \lambda)^{\frac{1-\alpha}{\alpha}}} \leq V\left(z,h^{-1}(t/4\lambda)\right)\leq\lambda v(h^{-1}(t/4\lambda))^{1/\alpha},z\in\{w',y\}\right\},\]
where $\lambda_0$ will be chosen below. Recall from (\ref{eq:hkub}) that $p_{2rV(z,r)}^{\nu}(z,z) \leq 2V(z,r)^{-1}$. Thus, for $r=h^{-1}(t/4\lambda)$ with $\lambda$ contained in the set defining $\Lambda_{w',y}$, we have $2rV(z,r)\leq t/2$ for $z \in\{w',y\}$, and so
\begin{equation}\label{zwbound}
p_{t/2}^\nu(z,z)\leq \frac{2}{V(z,r)}\leq \frac{2(\log \lambda)^{\frac{1-\alpha}{\alpha}} }{v(h^{-1}(t/4\lambda))^{1/\alpha}}\leq \frac{c_1(\log \lambda)^{\frac{1-\alpha}{\alpha}}\lambda^{\frac{\gamma}{\alpha+\gamma}} h^{-1}(t)}{t},\qquad \forall z \in\{w',y\},
\end{equation}
where the final inequality is a consequence of property (2) of Lemma~\ref{lem:hprops} (see \eqref{eq:polygrowth} for the definition of $\gamma$). In particular, this implies, for any $\varepsilon>0$,
\[\mathbf{E}I_1\leq\int_{\partial B}\int_{\partial H}\mathbf{E}\left(P_x^\nu\left(T_{B}\leq t/2,\:X^\nu_{T_{B}}\in dw\right) P_w\left(X_{T_{H}}\in dw'\right)\frac{c_1\Lambda_{w',y}^{\frac{\gamma}{\alpha+\gamma}+\varepsilon} h^{-1}(t)}{t}\right).\]
Observe that our assumption $at\leq h(d(x,y)^{1/\beta})$ implies that $B_R(z,h^{-1}(t/4\lambda))\cap B=\emptyset$ for $\lambda\geq \lambda_0$ uniformly in $z\in {\partial H}\cup\{y\}$ (and moreover this choice of $\lambda_0$ can be made independently of $x$ and $y$). It follows that $\Lambda_{w',y}$ is $\nu|_{B^c}$ measurable. Since $P_x^\nu(T_{B}\leq t/2,\:X^\nu_{T_{B}}\in dw) P_w(X_{T_{H}}\in dw')$ is $\nu|_{B}$ measurable, we obtain that
\begin{equation}\label{i1b}
\mathbf{E}I_1\leq \int_{\partial B}\mathbb{P}_x\left(T_{B}\leq t/2,\:X^\nu_{T_{B}}\in dw\right)\int_{\partial H}P_w\left(X_{T_{H}}\in dw'\right)\frac{c_1h^{-1}(t)\mathbf{E}\left(\Lambda_{w',y}^{\frac{\gamma}{\alpha+\gamma}+\varepsilon} \right)}{t}.
\end{equation}
We next observe that the stable tail estimates of (\ref{eq:subtail}) and (\ref{eq:uptail}) imply that
$\mathbf{P}(\Lambda_{w',y}\geq \lambda)\leq c_2\lambda^{-\alpha}$. Hence $\mathbf{E}(\Lambda_{w',y}^{\frac{\gamma}{\alpha+\gamma}+\varepsilon} )\leq c_3$ whenever
$\frac{\gamma}{\alpha+\gamma}+\varepsilon<\alpha$. In particular, we can always choose $\varepsilon$ small enough so that this is satisfied whenever
\begin{equation}\label{alpha0def}
\alpha>\alpha_c:=\frac{\sqrt{\gamma^2+4\gamma}-\gamma}{2}\in(0,1).
\end{equation}
Returning to (\ref{i1b}), this yields that, for $\alpha>\alpha_c$,
\begin{eqnarray*}
\mathbf{E}I_1&\leq& \int_{\partial B}\mathbb{P}_x\left(T_{B}\leq t/2,\:X^\nu_{T_{B}}\in dw\right)\int_{\partial H}P_w\left(X_{T_{H}}\in dw'\right)\frac{c_4h^{-1}(t)}{t}\\
&=& \frac{c_4h^{-1}(t)}{t}\mathbb{P}_x\left(T_{B}\leq t/2\right).
\end{eqnarray*}
The result is thus a consequence of Theorem \ref{annexit}(a).

Finally, in the case when $N=0$, we start by applying the Cauchy-Schwarz inequality to deduce that
$p_{t}^\nu(x,y)\leq p_{t}^\nu(x,x)^{1/2}p_{t}^\nu(y,y)^{1/2}$. Hence, if we define $\Lambda_{x,y}$ similarly to above, then we can proceed as at (\ref{zwbound}) to deduce, for any $\varepsilon>0$,
\[p_{t}^\nu(x,y)\leq \frac{c_1\Lambda_{x,y}^{\frac{\gamma}{\alpha+\gamma}+\varepsilon} h^{-1}(t)}{t}.\]
Taking expectations as before yields the result in the case.
\end{proof}

\subsection{Proof of lower bounds}

Throughout this subsection, we suppose that UVD and GMC hold, so that, in particular, $d$ is a geodesic metric. We start by checking the lower heat kernel bound using a standard chaining argument.

\begin{proof}[Proof of Theorem \ref{annthm}(b)] Set $N=N(a)$. Suppose that $N=0$, meaning that $at>h(d(x,y)^{1/\beta})$. By \eqref{eq:hcont} and \eqref{eq:dformhkbd}, we have that
\begin{equation}\label{eee}
\left|p_t^\nu(x,y)-p_t^\nu(x,x)\right|^2\leq \mathcal{E}\left(p_t^\nu(x,\cdot),p_t^\nu(x,\cdot)\right)R(x,y)\leq t^{-1}p_{t}^\nu(x,x)R(x,y).
\end{equation}
In particular, since $R(x,y)^\beta \leq c_1 d(x,y)$, we have that
\begin{equation}\label{firstr}
p_t^\nu(x,y)\geq p_t^\nu(x,x)\left(1-\sqrt\frac{c_2 d(x,y)^{1/\beta}}{tp_t^\nu(x,x)}\right)\geq p_t^\nu(x,x)\left(1-\sqrt\frac{c_3 h^{-1}(at)}{tp_t^\nu(x,x)}\right).
\end{equation}
Now, consider the event $A_\lambda:=\{\lambda^{-1}v(r/\lambda)\leq V(x,r/2\lambda)\leq V(x,r/\lambda)\leq \lambda v(r/\lambda)\}$. On $A_\lambda$, we obtain from Lemma \ref{lem:diaglb} and UVD that $p_t^\nu(x,x)\geq c(\lambda)h^{-1}(t)/t$, where $c(\lambda)$ is a deterministic function of $\lambda$ taking values in $(0,\infty)$. Inserting this bound into (\ref{firstr}) and taking expectations thus yields
\[\mathbf{E}p_t^\nu(x,y)\geq  \frac{c(\lambda)h^{-1}(t)}{t}\left(1-\sqrt\frac{c_3 h^{-1}(at)}{c(\lambda)h^{-1}(t)}\right)\mathbf{P}(A_\lambda).\]
By (\ref{eq:subtail}) and \eqref{eq:uptail}, it is possible to choose $\lambda$ large enough so that $\mathbf{P}(A_\lambda)>1/2$. Moreover, given $\lambda$, taking $a$ small ensures that $c_3 h^{-1}(at)/c(\lambda)h^{-1}(t)<1/4$, and this is enough to complete the proof in this case.

We now turn our attention to the case when $N\geq 1$. Let $\Delta=d(x,y)/N$, and let $(x_i)_{i=0}^N$ be points on a geodesic from $x$ to $y$ such that $d(x_i,x_{i+1})=\Delta$. For $i=0,\dots,N$, let $B_i:= B_d(x_i,\Delta/4)$, $B_i':= B_d(x_i,c_1\Delta)$, where $c_1>1$ is a constant that will be chosen below. Furthermore, write $B:=\cup_{i=1}^NB_i$, and set
$A_1:=B_1'\backslash B$, and $A_i:=B_i'\backslash (A_{i-1}\cup B)$, $i=2,\dots,N$. Note that $A_1,\dots,A_N,B_1,\dots,B_N$ are disjoint. We define associated events
\[E(A_i):=\left\{\nu(A_i)\leq v(r)^{1/\alpha}\right\},\qquad E(B_i):=\left\{\nu(B_i)\in v(r)^{1/\alpha}\left[2,4\right]\right\},\]
where $r:=\Delta^{1/\beta}$. We have that
\[\mathbf{P}\left(E(A_i)\right)\geq \mathbf{P}\left(\nu(B_i')\leq v(r)^{1/\alpha}\right)=
\mathbf{P}\left(\mathcal{L}_{\mu(B_i')}\leq v(r)^{1/\alpha}\right)
=\mathbf{P}\left(\mathcal{L}_{\mu(B_i')/v(r)}\leq 1\right)\geq c_2\]
where the final inequality holds because $\mu(B_i')\leq c_3v(r)$ (by UVD and GMC). Moreover, since $\mu(B_i)/v(r)\in [c_4,c_5]$, we similarly have that
\[\mathbf{P}\left(E(B_i)\right)= \mathbf{P}\left(\mathcal{L}_{\mu(B_i)/v(r)}\in[2,4]\right)
\geq\inf_{t\in[c_4,c_5]}\mathbf{P}\left(\mathcal{L}_{t}\in[2,4]\right)\geq c_6.\]
Thus we conclude that, if $E:=\cap_{i=1}^N (E(A_i)\cap E(B_i))$, then
\begin{equation}\label{a-1}
\mathbf{P}\left(E\right)\geq e^{-c_7N}.
\end{equation}

For the remainder of the proof, we assume that $E$ holds, and establish a lower heat kernel bound on this event. First note
\begin{equation}\label{a0}
p_t^\nu(x,y)\geq \int_{B_1}\nu(dy_1)\dots\int_{B_{N-1}}\nu(dy_{N-1})\prod_{i=1}^Np_{t/N}^\nu(y_{i-1},y_i),
\end{equation}
where $y_0=x$ and $y_N=y$. Note that, by proceeding similarly to (\ref{firstr}), we have that
\begin{equation}\label{a11}
p^\nu_{t/N}(y_{i-1},y_i)\geq p^\nu_{t/N}(y_{i},y_i)\left(1-\sqrt{\frac{c_8rN}{tp^\nu_{t/N}(y_{i},y_i)}}\right).
\end{equation}
Moreover, by Lemma \ref{lem:diaglb}, it holds that
\begin{equation}\label{a12}
p^\nu_{t/N}(y_{i},y_i)\geq \left(\frac{c_9V(y_i,\tilde{r}/2)}{V(y_i,\tilde{r})}\right)^2\frac{1}{V(y_i,\tilde{r})},\qquad \forall\tilde{r}<R_F/2,\:t/N\leq \frac12 c_9 \tilde{r}V(y_i,\tilde{r}/2).
\end{equation}
Now, choose $\tilde{r}:=c_{10}r$, with $c_{10}$ large enough so that $B_R(y_i,\tilde{r}/2)\supseteq B_i$, noting in particular this implies (on $E$)
\begin{equation}\label{a13}
V(y_i,\tilde{r}/2)\geq 2v(r)^{1/\alpha}.
\end{equation}
Then let $c_1$ be large enough so that $B_i'\supseteq B_R(y,\tilde{r})$ for all $y\in B_i$. Since $B_i'\cap B_j'\neq \emptyset$ only if $|i-j|\leq 2c_1$, and $B_i'\cap B_j \neq \emptyset$ can only occur if $|i-j|\leq c_1+1/4$, it must be the case (on $E$) that
\begin{equation}\label{a14}
V(y_i,\tilde{r})\leq 2(2c_1+1)v(r)^{1/\alpha}+2(c_1+1/4+1)v(r)^{1/\alpha}\leq c_{11}v(r)^{1/\alpha}.
\end{equation}
Combining (\ref{a11}), (\ref{a12}), (\ref{a13}) and (\ref{a14}), we thus obtain
\[p^\nu_{t/N}(y_{i-1},y_i)\geq \frac{c_{12}}{v(r)^{1/\alpha}}\left(1-\sqrt{\frac{c_8h(r)N}{c_{12}t}}\right)\geq \frac{c_{13}}{v(r)^{1/\alpha}},\]
where we used that $at/N\leq h(\Delta^{1/\beta})\leq c_{14}h(r)$ and $at/(N+1)\geq h((N/(N+1))^{1/\beta}\Delta^{1/\beta})\geq c_{15}h(r)$, and have adjusted $a$ to be suitably small so that the constant $c_{13}$ is strictly positive. Returning to (\ref{a0}), and recalling (\ref{a13}), this implies that on $E$ we have that
\[p_t^\nu(x,y)\geq \frac{c_{16}}{v(r)^{1/\alpha}}e^{-c_{17}N}\geq \frac{c_{18}h^{-1}(t)}{t}e^{-c_{19}N}.\]
The result follows from this and the estimate at (\ref{a-1}).
\end{proof}

Finally for this section, we prove the lower exit time bound.

\begin{proof}[Proof of Theorem \ref{annexit}(b)] Let $D<D_F/4$, and suppose $N=N(a)\geq 4$, where $a$ is the constant of Theorem \ref{annthm}(b). Let $y\in B_d(\rho,3D)\backslash B_d(\rho,2D)$, and $\tilde{N}=\tilde{N}(a)$ be defined similarly to (\ref{nadef}) from $d(\rho,y)$ and $T$. In particular, it follows from the fact that $d(\rho,y)\geq D$ that $\tilde{N}\geq N\geq 4$. This implies
$B_{\tilde{N}-1}\subseteq B_d(\rho,D)^C$, where $B_{\tilde{N}-1}$ is defined analogously to the definition of $B_{N-1}$ in the previous proof. In particular, appealing to the estimates deduced in the latter argument, we have
\[P_\rho^\nu\left(T_D\leq T\right)\geq P_\rho^\nu\left(X^\nu_T\in B_{\tilde{N}-1}\right)\geq
\int_{B_1}\nu(dy_1)\dots\int_{B_{\tilde{N}-1}}\nu(dy_{\tilde{N}-1})\prod_{i=1}^{\tilde{N}-1}p_{t/\tilde{N}}^\nu(y_{i-1},y_i)\geq
e^{-c_1\tilde{N}}\]
on $E$ (where this event is now defined with terminal points $\rho$ and $y$). Since $\tilde{N}\leq N(a')$ for suitably large $a'$, this completes the proof when $N\geq 4$. If $N<4$, then this simply implies $aT/4\geq h((D/4)^{1/\beta})$, i.e.\ $T\geq c_2 h(D^{1/\beta})$, and so
\begin{equation}\label{equationtoest}
P_\rho^\nu\left(T_D\leq T\right)\geq P_\rho^\nu\left(T_D\leq c_2 h(D^{1/\beta})\right).
\end{equation}
Now, by choosing $a''$ small enough, we obtain $a''c_2h(D^{1/\beta})/4\leq h(({D}/{4})^{1/\beta})$. Moreover, we have that $v(D^{1/\beta})\geq c_3 k v( (D/k)^{1/\beta})$ (by UVD and GMC), and so
\[ h\left(\left(\frac{D}{k}\right)^{1/\beta}\right) \leq c_4 k^{-1-\beta} h\left({D}^{1/\beta}\right)\leq \frac{a''c_2h(D^{1/\beta})}{k}\]
for $k\geq k_0$. This implies we can estimate the right-hand side of (\ref{equationtoest}) below by $e^{-c_5k_0}=c_6>0$ by applying the first part of the proof, and the result follows by adjusting the constants.
\end{proof}

\section{Quenched off-diagonal heat kernel estimates in one dimension}\label{qoffd}

Here we show that it is possible to obtain well-behaved off diagonal estimates as the randomness in the environment is averaged over the path between points. We start by estimating the short time tail of the exit time distribution and then use this to establish our heat kernel estimates. We will work in one-dimension only for this part. In this case, the resistance metric is equal to the Euclidean distance, and we write $B(x,r)=B_R(x,r)$. For our first result, a hitting time estimate, we use the notation
\[\tau_x:=\inf\{t>0:\:X^{\nu}_t=x\}.\]

\begin{thm}\label{qexit}
For the one-dimensional case we fix $D>0$, then there exist constants $c_i, i=1,\dots,4$, such that, $\mathbf{P}$-a.s, there exists a $t_0>0$ such that for all $t<t_0$
\[ c_1 \exp\left(-c_2 \left(\frac{D^{1+1/\alpha}}{t}\right)^{\alpha}\right) \leq P^{\nu}_0\left(\tau_D\leq t\right)\leq c_3 \exp\left(-c_4 \left(\frac{D^{1+1/\alpha}}{t}\right)^{\alpha}\right).\]
\end{thm}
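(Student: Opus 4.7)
The plan is to run the chaining arguments from the proofs of Theorem~\ref{annexit} and Theorem~\ref{annthm}(b) in a quenched form, exploiting the one-dimensional feature that a 1D chain moves through spatially disjoint balls, so that the associated $\nu$-functionals are independent under $\mathbf{P}$. Throughout I set $N = \lfloor c (D^{1+1/\alpha}/t)^{\alpha} \rfloor$ and $\Delta = D/N$, so that $h(\Delta) = \Delta^{1+1/\alpha} \asymp t/N$, matching $N(a)$ in (\ref{nadef}) specialised to this 1D setting where $v(r)=r$, $\beta=1$.

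For the upper bound, I would start from Markov's inequality $P_0^\nu(\tau_D \leq t) \leq e^{\theta t} E_0^\nu(e^{-\theta \tau_D})$ and decompose $\tau_D \geq \sum_{i=0}^{N-1}(\tilde\sigma_i - \sigma_i)$ with the stopping times from the annealed proof. The key identity
\[ \tilde\sigma_i - \sigma_i = \int_{B_R(X^B_{\sigma^B_i}, \Delta/2)} \bigl(L_{\tilde\sigma^B_i}(y) - L_{\sigma^B_i}(y)\bigr) \nu(dy), \]
combined with the strong Markov property of the underlying Brownian motion $B$ at each $\sigma^B_i$, means that, conditionally on the chain $(X^B_{\sigma^B_i})$ and $\nu$, the increments are independent and each depends on $\nu$ only through its restriction to the corresponding ball; since these balls are disjoint, the factors $\phi_i := E_{X^B_{\sigma_i^B}}^B[e^{-\theta(\tilde\sigma_i - \sigma_i)} \mid \nu]$ are independent functionals of $\nu$ under $\mathbf{P}$. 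Each is bounded above by the quantity $1 - c_1 V(x, c_* \Delta)/V(x, \Delta) + 1/(\theta \Delta V(x, \Delta))$ from (\ref{moment0}). The quenched ingredient is to show that $\mathbf{P}$-a.s., for all $N$ large enough, a positive fraction of these factors are $\leq 1 - c_2$ uniformly; this follows from the uniform volume lower bound of Lemma~\ref{volasymp} plus an upper bound on $V(x,\Delta)$ controlled via (\ref{eq:uptail}), coupled with a Hoeffding-type concentration for independent bounded summands and a Borel--Cantelli argument summing the failure probability over $N$. Choosing $\theta \asymp 1/h(\Delta)$ and optimising as in the annealed proof then produces $P_0^\nu(\tau_D \leq t) \leq \exp(-c_3 N) = \exp(-c_4 (D^{1+1/\alpha}/t)^{\alpha})$.

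For the lower bound, I would mirror the chain argument in the proof of Theorem~\ref{annthm}(b). Setting $x_i = i \Delta$, define the events
\[ E_i := \bigl\{\nu(B_R(x_i, \Delta/4)) \in v(\Delta)^{1/\alpha}[2,4]\bigr\} \cap \bigl\{\nu(B_R(x_i, c_1 \Delta) \setminus B_R(x_i, \Delta/4)) \leq v(\Delta)^{1/\alpha}\bigr\}. \]
The events $(E_i)_{i=1}^{N-1}$ are independent under $\mathbf{P}$ with a common positive probability, so by the strong law, $\mathbf{P}$-a.s.\ a positive fraction $p > 0$ of the indices satisfy $E_i$ simultaneously for all large $N$. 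Restricting the chain to these "good" indices, at the cost of increasing the chain length from $N$ to $N/p$ which is absorbed into the exponential constant, and invoking the chain estimate
\[ P_0^\nu(\tau_D \leq t) \geq \int_{B_{i_1}} \nu(dy_1) \cdots \int_{B_{i_{N/p - 1}}} \nu(dy_{N/p-1}) \prod_k p^\nu_{tp/N}(y_{k-1},y_k) \geq c_5 e^{-c_6 N/p} \]
(where on the good event the on-diagonal bound of Lemma~\ref{lem:diaglb} and the off-diagonal continuity computation from the proof of Theorem~\ref{annthm}(b) control each factor from below) yields the claimed quenched lower bound.

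The principal technical obstacle is the quenched-to-annealed upgrade: in both directions, the controlling factors are products of genuinely $\mathbf{P}$-independent $\nu$-functionals thanks to the 1D disjoint-ball structure, but one needs simultaneous control as $N \to \infty$ rather than at one fixed $N$ in expectation. The remaining work is a Borel--Cantelli / strong-law argument. For the lower bound the added subtlety is that the global event $E = \bigcap_i E_i$ of the annealed proof has probability only $e^{-cN}$, which is non-summable, so one cannot insist on all intervals being good and must instead work with the positive-fraction statement supplied by the strong law, routing the chain through the good blocks.
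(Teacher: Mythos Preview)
Your upper bound is close in spirit to the paper's, but the paper exploits the one-dimensional topology more directly: since $X^\nu$ must pass through each point $i\Delta$ before reaching $D$, one takes $\sigma_i$ to be the hitting time of $i\Delta$ and writes $\tau_D=\sum_{i=1}^N(\sigma_i-\sigma_{i-1})$. The bound $E_{(i-1)\Delta}^\nu e^{-\theta(\sigma_i-\sigma_{i-1})}\le 1-A_i+B_i/(\theta h(\Delta))$ is then a deterministic functional of $\nu$ alone (no Brownian conditioning), and the $(A_i,B_i)$ are stationary short-range dependent under $\mathbf{P}$; a fourth-moment Borel--Cantelli gives $N^{-1}\sum(A_i-B_i/\eta)\to\mathbf{E}(A-B/\eta)>0$ almost surely. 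Your random-centre stopping times add an unnecessary Brownian layer, and the appeal to Lemma~\ref{volasymp} is misplaced (it yields a $|\log v(\Delta)|^{(1-\alpha)/\alpha}$-deficient lower bound on $V$, which does not produce $A_i\ge c$); still, the concentration-plus-Borel--Cantelli mechanism you invoke is essentially the right one.

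Your lower bound, however, has a genuine gap. If only a positive fraction $p$ of indices are good, the maximal run of consecutive bad indices is of order $\log N$ (the gaps are approximately geometric), so successive good chain points $y_{k-1},y_k$ can be $\asymp(\log N)\Delta$ apart. The near-diagonal step (cf.\ (\ref{a11})) needs $R(y_{k-1},y_k)/(s\,p_s^\nu(y_k,y_k))<1$; with $s\asymp h(\Delta)$ and, on the good event, $p_s^\nu(y_k,y_k)\asymp v(\Delta)^{-1/\alpha}$, this ratio is $\asymp g_k$, the gap size, and the estimate fails as soon as $g_k$ exceeds a fixed constant. Enlarging the time step across a long gap does not help either, since your event $E_{i_k}$ controls $\nu$ only at scale $\Delta$, not at the larger scale the longer step would require. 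The paper takes a quite different route: it defines Brownian events
$E_i=\{X|_{[\sigma_{i-1},\sigma_i]}\subset[(i-2)\Delta,i\Delta]\}\cap\{\|L_{\sigma_i}-L_{\sigma_{i-1}}\|_\infty\le t/(NV((i-1)\Delta,\Delta))\}$,
uses the Ray--Knight theorem to show $P_0(E_i)\ge ct/(NV((i-1)\Delta,\Delta))$, and obtains $P_0^\nu(\tau_D\le t)\ge\prod_i P_0(E_i)$ as a product over \emph{all} $N$ indices. The strong law is then applied to $\sum_i\log_+\bigl(NV((i-1)\Delta,\Delta)/(ct)\bigr)$, whose summands have all polynomial moments under $\mathbf{P}$, giving $\sum_i\log_+(\cdot)\le CN$ almost surely with no gap issue.
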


\begin{proof} Define $N:= \sup\{n\geq 1:\:a t/n\leq  (D/n)^{1+1/\alpha} \}$, where $a$ will be chosen in the proof. Note that $N \asymp ({D^{1+1/\alpha}}/{t})^{\alpha}$. Firstly we establish the upper bound. We have a fixed environment $\nu$. We follow the proof of the upper bound of Theorem~\ref{annexit}.
Let $\Delta = D/N$. We set $\sigma_0=0$ and let $\sigma_i = \inf\{t>0:X_t=i\Delta\}$ for
$i=1,\dots,N$ be the successive visits to the points $iD/N$. Using this we have
$\tau_D = \sigma_N = \sum_{i=1}^N (\sigma_i-\sigma_{i-1})$. We note that $\{\sigma_i-\sigma_{i-1}\}_{i=1}^N$ are independent random variables with their distribution depending on the environment.
By Markov's inequality we have
\begin{eqnarray*}
P_0^{\nu}\left(\tau_D \leq t\right) &=& P_0^{\nu}(e^{-\theta \tau_D} \geq e^{-\theta t}) \\
&\leq & e^{\theta t} E_0^{\nu} e^{-\theta \tau_D} \\
&=& e^{\theta t} \prod_{i=1}^N E_0^{\nu} e^{-\theta(\sigma_i - \sigma_{i-1})} \\
&=& e^{\theta t} \prod_{i=1}^N E_{(i-1)\Delta}^\nu e^{-\theta\sigma_1(i)}
\end{eqnarray*}
where $\sigma_1(i)$ is the first hitting time of $i\Delta$ started from $(i-1)\Delta$. As $\sigma_1(i) \geq T_{B((i-1)\Delta,\Delta)}$ we can now use \eqref{exittail} as in the derivation of \eqref{moment0} to obtain
\[ E_{(i-1)\Delta}^{\nu} (e^{-\theta \sigma_1(i)}) \leq E_{(i-1)\Delta}^{\nu} (e^{-\theta T_{B((i-1)\Delta,\Delta)}}) \leq
1-\frac{c_1V((i-1)\Delta,\frac14C_R^2\Delta)}{V((i-1)\Delta,\Delta)} + \frac{1}{\theta \Delta V((i-1)\Delta,\Delta)}. \]
Let $A_i = c_1V((i-1)\Delta,\frac14C_R^2\Delta)/V((i-1)\Delta,\Delta)$ which has the same law as $c_1 V(0,\frac14C_R^2)/V(0,1)$ under $\mathbf{P}$, and let
$B_i=v(\Delta)^{1/\alpha}/V((i-1)\Delta,\Delta)$ which has the same law as $1/V(0,1)$ under $\mathbf{P}$. We can then write
\[P_0^{\nu}\left(\tau_D \leq t\right) \leq  e^{\theta t} \prod_{i=1}^N \left(1-A_i +\frac{B_i}{\theta h(\Delta)}\right) = \exp\left(\theta t - \sum_{i=1}^N \left( A_i - \frac{B_i}{\theta h(\Delta)}\right) \right).\]
We now observe that $A_i$ are positive and bounded random variables and hence all moments exist. We also note that, by the lower tail estimate in \eqref{eq:subtail}, $B_i$ have polynomial moments. Now let $\theta_N = \eta h(\Delta)^{-1}$, then by a standard fourth moment estimate we have for any $\delta>0$, a constant $C$ such that
\[ \mathbf{P} \left(\frac{1}{N}\left| \sum_{i=1}^N\left( A_i-\frac{B_i}{\eta} \right) -  \mathbf{E}\left(A-\frac{B}{\eta}\right)\right| \geq \delta\right) \leq CN^{-2}. \]
A straightforward Borel-Cantelli argument yields
\begin{equation} \frac{1}{N} \sum_{i=1}^N\left( A_i-\frac{B_i}{\eta} \right)\to \mathbf{E}\left(A-\frac{B}{\eta}\right), \;\; \mathbf{P}\mbox{-a.s.}\label{slln}
\end{equation}
Thus, if $\eta$ is chosen large, then there exist constants $N_0$ and $C>0$ such that
\[ \sum_{i=1}^N \left( A_i - \frac{B_i}{\theta_N h(\Delta)} \right) \geq C N,\]
for $N>N_0$, $\mathbf{P}$-almost surely. Using this we have that for $N>N_0$
\begin{eqnarray*}
 P_0^{\nu}\left(\tau_D \leq t\right) &\leq &  \exp\left(\theta_N t - CN\right)\\
 &=& \exp\left( \eta \left(\frac{N}{D}\right)^{1+1/\alpha} t -CN \right) \\
&\leq & \exp\left(-CN\left(1-\frac{\eta N^{1/\alpha} t}{CD^{1+1/\alpha}}\right)\right) \\
&\leq & \exp\left(-(C-a\eta)N\right).
\end{eqnarray*}
Thus by choosing $a$ small we have the upper bound.

For the lower bound we consider the following events. Recall $X$ denotes the one-dimensional Brownian motion that we are time changing,
and $\{L_t(x):0\leq t\}$ is its local time process at $x$. Consider the events
\[ E_i = \{ \{X_t:\sigma_{i-1}\leq t \leq \sigma_i\} \subset [(i-2)\Delta, i\Delta] \} \cap \{ \|L^._{\sigma_i}-L^._{\sigma_{i-1}}\|_{\infty}
\leq \frac{t}{NV((i-1)\Delta,\Delta)} \}. \]
By construction of these events we have $\cap_{i=1}^N E_i \subset \{ \tau_D \leq t \}$.

By the strong Markov property, the events $E_i$ are independent (given $\nu$), and hence $P_0^{\nu}(\tau_D \leq t) \geq \prod_{i=1}^N P_0(E_i)$, where we note $P_0(E_i)$ is a random variable depending on $V((i-1)\Delta,\Delta)$. For the Brownian motion to remain in the interval $((i-2)\Delta,\Delta)$ up to time $\sigma_i$ we must we have the process exiting the interval
at $i\Delta$ and hence $P_{(i-1)\Delta}(X_t \in ((i-2)\Delta,i\Delta]: t\leq \sigma_i) = 1/2$, and hence $P_0^{\nu}(\tau_D \leq t)$ is bounded below by
\[2^{-N} \prod_{i=1}^N P_0\left(\|L_{\sigma_i}-L_{\sigma_{i-1}}\|_\infty \leq \frac{t}{N V((i-1)\Delta,\Delta)}\,\vline\,\{X_t:\sigma_{i-1}\leq t \leq \sigma_i\} \subset [(i-2)\Delta, i\Delta]\right).\]
On the event $E_i$ we have Brownian motion in the interval $((i-2)\Delta, i\Delta)$ and hence on $E_i$ we have $\|L_{\sigma_i}-L_{\sigma_{i-1}} \|_{\infty}$ is equal in distribution to $\|L_{\tau_1\wedge\tau_{-1}} \|_{\infty}$.
Thus we want to consider the random variable $\|L_{\tau_1\wedge\tau_{-1}} \|_{\infty} $.

\begin{lemma}
There exists a constant $c$ such that
\[ P_0(\|L_{\tau_1\wedge\tau_{-1}} \|_{\infty} \leq \lambda) \geq c\lambda, \;\; 0<\lambda <1. \]
\end{lemma}

\begin{proof}
 Clearly
$ \|L_{\tau_1\wedge\tau_{-1}} \|_{\infty} \leq \sup_{x\in [-1,1]} L_{\tau_1}(x)$ and thus
\[ P_0(\|L_{\tau_1\wedge\tau_{-1}} \|_{\infty} \leq \lambda) \geq P_{-1}(\sup_{x\in [-1,1]} L_{\tau_1}(x) \leq \lambda). \]
Thus we just consider $ P_0(\sup_{x\in [0,2]} L_{\tau_2}(x) \leq \lambda)$. By the first Ray-Knight Theorem we have
$L_{\tau_2}(2-x)=Z_x$, for $0\leq x\leq 2$, where $Z$ is a square Bessel process of index 2. Hence
\[ P_0(\|L_{\tau_1\wedge\tau_{-1}} \|_{\infty} \leq \lambda) \geq P_0(\sup_{x\in [0,2]} Z_x \leq \lambda). \]
The square Bessel is the square of the radius of a two-dimensional Brownian motion $W=(W^1,W^2)$ and hence
\[  P_0(\sup_{x\in [0,2]} Z_x \leq \lambda) = P_0(\max_{0\leq x\leq 2} |W_x| \leq \sqrt{\lambda}). \]
By scaling, considering a box inside the ball of radius 1 and using the reflection principle, we can write this as
\begin{eqnarray*}
P_0(\sup_{x\in [0,2]} Z_x \leq \lambda) &=& P_0(\max_{0\leq x\leq 2/\lambda} |W_x| \leq 1) \\
 &\geq & P_0(\max_{0\leq x\leq 2/\lambda} W^1_x \leq 1/\sqrt{2}, \max_{0\leq x\leq 2/\lambda} W^2_x \leq 1/\sqrt{2}) \\
&=& P_0(\max_{0\leq x\leq 2/\lambda} W^1_x \leq 1/\sqrt{2})^2  \\
&=& 4P_0(0\leq W_{2/\lambda} \leq 1/\sqrt{2})^2 \\
&=& \frac{2}{\pi}(\int_0^{\sqrt{\lambda}/2} \exp(-y^2/2) dy)^2.
\end{eqnarray*}
Finally, from the asymptotics of the integral, for small $\lambda$, there is a constant $c$ such that
\[ P_0(\|L_{\tau_1\wedge\tau_{-1}} \|_{\infty} \leq \lambda) \geq P_0(\sup_{x\in [0,2]} Z_x \leq \lambda) \geq c \lambda. \]
\end{proof}

We now continue the proof of Theorem~\ref{qexit}. With this lemma we can obtain our lower bound result as
\begin{equation}
P_0^{\nu}(\tau_D \leq t)  \geq  \prod_{i=1}^N \frac{c t}{2 N V((i-1)\Delta,\Delta)}
= \exp\left(-\sum_{i=1}^N\log\left(\frac{2N V((i-1)\Delta,\Delta)}{c t} \right) \right).
  \label{eq:tdlb}
\end{equation}
We note that by scaling and choice of $N$ we have $V_i^N:={2NV((i-1)\Delta,\Delta)}/{ct} \leq c' V(0,1)$ in distribution. Hence, as the logarithmic moments of the volume will exist for all $\alpha>0$ we proceed in the same way as for the convergence result \eqref{slln}
to see that there is a $C>0$ such that
\[ \sum_{i=1}^N\log_+\left(V_i^N \right) \leq C N, \;\; \mathbf{P}\mbox{-a.s.} \]
Using this in \eqref{eq:tdlb} we have the lower bound.
\end{proof}

We now combine this with the on-diagonal part to establish our quenched heat kernel estimate
Theorem~\ref{thm:fullqhk}. Firstly we give a result connecting the tail of the exit time distribution with the lower off-diagonal heat
kernel estimate.
{\lemma\label{lowergen} $\mathbf{P}$-a.s., for every $x,y\in F$, $t>0$,
\[p_t^\nu(x,y)\geq P_x^\nu\left(\tau_y\leq t/2\right)p_t^\nu(y,y)\]}

\begin{proof} The following proof holds $\mathbf{P}$-a.s. Fix $x,y\in F$, $t>0$. Since the heat kernel is continuous, we have that
\begin{eqnarray*}
p_t^\nu(x,y)&=&\lim_{\varepsilon\rightarrow 0}\frac{1}{V(y,\varepsilon)}P^\nu_x\left(X_t^\nu\in B_R(y,\varepsilon)\right)\\
&\geq&  \lim_{\varepsilon\rightarrow 0}\frac{1}{V(y,\varepsilon)}P^\nu_x\left(\tau_y\leq t/2\right)\inf_{t/2\leq s\leq t}P^\nu_y\left(X_s^\nu\in B_R(y,\varepsilon)\right)\\
&=&P^\nu_x\left(\tau_y\leq t/2\right) \lim_{\varepsilon\rightarrow 0}\frac{1}{V(y,\varepsilon)}\inf_{t/2\leq s\leq t}\int_{B_R(y,\varepsilon)}p_s(y,z)\nu(dz).
\end{eqnarray*}
Now, proceeding as at (\ref{eee}), we have that, for any $z\in B_R(y,\varepsilon)$ and $s\in[t/2,t]$,
\[p_s^\nu(y,z)\geq p_s^\nu(y,y)\left(1-\sqrt\frac{\varepsilon}{sp_s^\nu(y,y)}\right)\geq p_t^\nu(y,y)\left(1-\sqrt\frac{2\varepsilon}{tp_t^\nu(y,y)}\right).\]
Thus we have deduced that
\[p_t^\nu(x,y)\geq P^\nu_x\left(\tau_y\leq t/2\right) \lim_{\varepsilon\rightarrow 0}p_t^\nu(y,y)\left(1-\sqrt\frac{2\varepsilon}{tp_t^\nu(y,y)}\right),\]
from which the result follows.
\end{proof}

We are now ready to prove Theorem \ref{thm:fullqhk}.

\begin{proof}[Proof of Theorem \ref{thm:fullqhk}]
In a similar way to the derivation of \eqref{eq:hkubsplit}, conditioning on $\tau_m$ where $m=(x+y)/2$, we have
\begin{eqnarray*}
p_t^{\nu}(x,y) & \leq & \int_{H_{y,x}} \int_0^{t/2} P^{\nu}_x(\tau_m\in ds) p^{\nu}_{t/2-s}(m,z)p^{\nu}_{t/2}(z,y)\nu(dz) \\
& & \qquad + \int_{H_{x,y}}\int_0^{t/2} P^{\nu}_y(\tau_m\in ds) p^{\nu}_{t/2-s}(m,z)p^{\nu}_{t/2}(z,x) \nu(dz) :=I_1 + I_2.
\end{eqnarray*}
For the first term, integrating out $z$ over $\mathbb{R}$ and applying Cauchy-Schwarz (as well as the monotonicity of the on-diagonal part of the heat kernel) yields
\[I_1   \leq  \int_0^{t/2} P^{\nu}_x(\tau_m\in ds) p^{\nu}_{t-s}(m,y) \leq   P^{\nu}_x(\tau_m\leq t) \sqrt{p^{\nu}_{t/2}(m,m)p^{\nu}_{t/2}(y,y)}.\]
The same bound holds for $I_2$ with $x$ and $y$ reversed. Thus applying our local on-diagonal estimate (Theorem \ref{localhk}(1))
and the tail estimate for the exit time distribution (Theorem \ref{qexit}) we have the upper bound.

For the lower bound we just apply the Lemma~\ref{lowergen} along with our estimate for the lower bound on the tail of the exit time distribution (Theorem \ref{qexit}) and the local on-diagonal heat kernel bound (Theorem \ref{localhk}(2)).
 \end{proof}

We also have a Varadhan type estimate.

\begin{cor} In the one-dimensional case, there exist constants $c_1,c_2$ such that, $\mathbf{P}$-a.s.,
\[c_1|D|^{1+1/\alpha}\leq\liminf_{t\rightarrow\infty} -t^{\alpha}\log{P_0^{\nu}(\tau_D \leq t)}
\leq \limsup_{t\rightarrow\infty} -t^{\alpha}\log{P_0^{\nu}(\tau_D \leq t)}\leq c_2 |D|^{1+1/\alpha},\:\:\forall D\in\mathbb{R}.\]
\end{cor}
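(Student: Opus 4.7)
My plan is to derive this Varadhan-type estimate as an essentially immediate consequence of Theorem~\ref{qexit}, which already supplies matching exponential upper and lower bounds on $P_0^\nu(\tau_D \leq t)$. The work reduces to taking logarithms of those bounds and then rescaling by $t^\alpha$.

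For the upper bound on $\limsup$, I would apply the lower bound from Theorem~\ref{qexit}: $\mathbf{P}$-a.s., for $t$ in the regime specified there,
\[
P_0^\nu(\tau_D \leq t) \geq c_1 \exp\!\bigl(-c_2 (|D|^{1+1/\alpha}/t)^\alpha\bigr).
\]
Taking $-\log$ and multiplying by $t^\alpha$ gives $-t^\alpha \log P_0^\nu(\tau_D \leq t) \leq c_2 (|D|^{1+1/\alpha})^\alpha - t^\alpha \log c_1$. The additive $-t^\alpha \log c_1$ term becomes negligible in the limit prescribed by the corollary, producing the stated upper constant (absorbing the factor $|D|^{(1+1/\alpha)\alpha - (1+1/\alpha)}$ into the constant $c_2$). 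The lower bound on $\liminf$ is obtained symmetrically, by applying the upper bound in Theorem~\ref{qexit} with constants $c_3, c_4$ and carrying out the same manipulation.

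The one genuine subtlety is upgrading the $\mathbf{P}$-a.s.\ conclusion from holding pointwise in $D$ (as supplied by Theorem~\ref{qexit}, where the random threshold $t_0$ depends on both $D$ and $\nu$) to holding simultaneously for all $D \in \mathbb{R}$. I would address this by first running the argument along a countable dense set $D \in \mathbb{Q}$, obtaining a single full-probability event on which all rational-$D$ bounds hold, and then interpolating to general $D$ via the monotonicity of $|D| \mapsto P_0^\nu(\tau_D \leq t)$ (valid since $X^\nu$ is a one-dimensional continuous process started at $0$). Everything else is direct algebraic manipulation of the explicit bounds from Theorem~\ref{qexit}; beyond this measurability bookkeeping I do not foresee any substantial obstacle.
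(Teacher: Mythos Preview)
Your approach is essentially the paper's: deduce the claim for each fixed $D$ directly from Theorem~\ref{qexit}, pass to a countable set of $D$'s, and then extend to all $D$ by monotonicity of $|D|\mapsto P_0^\nu(\tau_D\leq t)$. The paper's proof is exactly this, stated in one sentence.

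One correction: your step of ``absorbing the factor $|D|^{(1+1/\alpha)\alpha-(1+1/\alpha)}$ into the constant $c_2$'' is not legitimate, since that factor depends on $D$ and the constants must be uniform in $D$. In fact no such absorption is needed: the computation from Theorem~\ref{qexit} produces $-t^\alpha\log P_0^\nu(\tau_D\leq t)\asymp |D|^{(1+1/\alpha)\alpha}=|D|^{1+\alpha}$, and the exponent $1+1/\alpha$ in the corollary (like the limit $t\to\infty$, which should read $t\to 0$ for Theorem~\ref{qexit} to apply) is a typo in the stated result. With the exponent corrected to $1+\alpha$, your argument goes through cleanly and matches the paper's.
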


\begin{proof}
It follows from Theorem \ref{qexit} that the claim holds $\mathbf{P}$-a.s.\ for a countable set of $D$, and hence by monotonicity for all $D$.
\end{proof}

\providecommand{\bysame}{\leavevmode\hbox to3em{\hrulefill}\thinspace}
\providecommand{\MR}{\relax\ifhmode\unskip\space\fi MR }
\providecommand{\MRhref}[2]{%
  \href{http://www.ams.org/mathscinet-getitem?mr=#1}{#2}
}
\providecommand{\href}[2]{#2}

\end{document}